\newtheorem{theorem}{Theorem}[section]
\newtheorem{proposition}[theorem]{Proposition}
\newtheorem{lemma}[theorem]{Lemma}
\newtheorem{corollary}[theorem]{Corollary}
\newtheorem{definition}[theorem]{Definition}
\newcommand{\dotminus}{\mathbin{{{-\mkern -9.5mu%
\mathchoice{\raise 2pt\hbox{$\cdot$}\mkern6mu}%
{\raise 2pt\hbox{$\cdot$}\mkern6mu}%
{\raise 1.5pt\hbox{$\scriptstyle\cdot$}\mkern4mu}%
{\raise 1pt\hbox{$\scriptscriptstyle\cdot$}\mkern4mu}}}}}
\newcommand{\BISH}{\mathrm{BISH}}
\newcommand{\DC}{\mathrm{DC}}
\newcommand{\LPO}{\mathrm{LPO}}
\newcommand{\Nat}{{\mathbb N}}
\newcommand{\Fi}{{\mathbb F}}
\newcommand{\DL}{\mathrm{DL}}
\newcommand{\DLs}{\DL^{*}}
\newcommand{\FNN}{\Fi(\Nat, \Nat)}
\newcommand{\FNNtwo}{\Fi(\Nat^{2}, \Nat)}
\newcommand{\FNNthree}{\Fi(\Nat^{3}, \Nat)}
\newcommand{\FXY}{\Fi(X, Y)}
\newcommand{\lex}{\mathrm{lex}}
\newcommand{\nset}{\textbf{n}}
\newcommand{\PH}{\mathrm{PH}}
\newcommand{\twoo}{2}
\newcommand{\twoset}{\textbf{\twoo}}
\newcommand{\FNtwo}{\Fi(\Nat, \twoset)}
\newcommand{\Peak}{\mathrm{Peak}}
\begin{document}

\date{}

% Uncomment one of the following two, if you are not going for the 
% traditional copyright transfer agreement.

%\exclusivelicense                % ACM gets exclusive license to publish, 
                                  % you retain copyright

%\permissiontopublish             % ACM gets nonexclusive license to publish
                                  % (paid open-access papers, 
                                  % short abstracts)

% 'preprint' option specified.

\title{Constructive Combinatorics of Dickson's Lemma}
%\subtitle{Subtitle Text, if any}

\author{Iosif Petrakis\\
        University of Munich\\
       petrakis@math.lmu.de}
%\authorinfo{Name2\and Name3}
%           {Affiliation2/3}
%           {Email2/3}

\maketitle

\begin{abstract}
We study constructively the relations between the finite cases of Dickson's lemma.
Although there are many constructive proofs of them, the novel aspect of our
proofs is the extraction of a corresponding bound. We provide some new one-step unprovability results i.e., 
results of the form ``a finite case of Dickson's lemma does not prove in one step a stronger case of it''.
Moreover, we study the infinite cases of
Dickson's lemma from the point of view of constructive reverse mathematics. We work within Bishop's
informal system of constructive mathematics BISH. 
\end{abstract}

%\ccsdesc[500]{Mathematics of computing~Topology}

\section{Introduction}
\label{sec: intro}

\subsection{The finite and infinite cases of a combinatorial theorem $\tau$}
\label{subsec: comb}

According to~\cite{BM73}, p.391, the basic propositions of (classical) combinatorics 
\begin{quote}
assert, crudely speaking, that every system of a certain class possesses a large subsystem with 
a higher degree of organization than the original system.
\end{quote}

The larger the subsystem is proven to be, the stronger the corresponding theorem is. 
Suppose that $\tau$ is a theorem of combinatorics asserting for a system $S$ in a class of systems $\varSigma$ 
the existence of a subsystem $I$ of $S$ that has property $P$, which generally $S$ does not. 
In most cases property $P$ is \textit{hereditary}, i.e., if $I{'} \subseteq I$ and $P(I)$, then $P(I{'})$.
If $|X|$ denotes the cardinality of a set $X$, $l \geq 1$ and $\xi$ is a cardinal strictly larger than $\aleph_{0}$, usually
the following finite and infinite cases of $\tau$ are
considered.
\begin{enumerate}
\item The \textit{finite} case $\tau(l)$: If $|S| \geq {l}$, there is $I \subseteq S$ 
such that $|I| = l$ and $P(I)$. 
\item The \textit{strong finite} case $\tau^{*}(l)$: There is $M(l) > 0$ such that
if $l \leq |S| \leq M(l)$, there is $I \subseteq S$ 
such that $|I| = l$ and $P(I)$. 
\item The \textit{unbounded} case: If $|S| \geq {\aleph_{0}}$, then $\forall_{l \geq 1}(\tau(l))$.
\item The \textit{infinite} case $\tau(\aleph_{0})$: If $|S| \geq {\aleph_{0}}$, there is $I \subseteq S$ 
such that $|I| = \aleph_{0}$ and $P(I)$. 
\item The \textit{higher infinite} case $\tau(\xi)$:
If $|S| \geq {\xi}$, there is $I \subseteq S$ 
such that $|I| = \xi$ and $P(I)$.
\end{enumerate}

For the constructive study of such a combinatorial theorem $\tau$ a general pattern can be described.

\textbf{a.} The finite case $\tau(\l)$ is constructively proved, although there are finite combinatoric propositions,
like Friedman's Proposition B, which is provable only with the use of large cardinals
(see~\cite{Fr98} and~\cite{HHM08}), or the proposition of Paris-Harrington, which is provable 
in second-order anlysis but not in Peano arithmetic, and also lacks a constructive 
proof\footnote{On 2011, during a colloquium-talk at LMU, Veldman suggested 
to try to find such a proof.}.  

\textbf{b.} In many cases a strong case $\tau^{*}(l)$ is also
constructively proved. To find explicitly though, a bound for the strong case $\tau^{*}(l)$ is usually a 
difficult problem, and for many well-studied combinatorial theorem, like Higman's lemma, or Kruskal's 
theorem, the extraction of a bound $M(l)$ from a constructive proof of $\tau(l)$ is, to our knowledge,
not yet known.

\textbf{c.}The unbounded case $\forall_{l \geq 1}(\tau(l))$ is generally constructively proved.

\textbf{d.} The 
infinite case $\tau(\aleph_{0})$ is not constructively provable, as one usually can provide a 
Brouwerian counterexample to it, or show that $\tau(\aleph_{0})$ is constructively 
equivalent to some constructively 
unacceptable proposition, like the limited principle of omniscience LPO. It is possible though, to
find a classically equivalent formulation of $\tau(\aleph_{0})$, which admits a constructive proof
(see e.g., the intuitionistic proof of the infinite Ramsey theorem in~\cite{BV93}, or
it's constructive proof in Type Theory in~\cite{VCW12}). It is not uncommon that non-constructive proofs 
inspire, or have a constructive counterpart. E.g., minimal-bad-sequence-proofs of Higman's lemma,
or of Dickson's lemma inspired corresponding constructive (inductive) proofs of them.

\textbf{e.} The higher infinite case $\tau(\xi)$ is generally 
beyond the scope of constructive combinatorics. 

%In combinatorics it is not unusual to reach non-trivial
%conclusions by repeating seemingly trivial thought-steps. 
Often the proof of some case of $\tau$ is
based on the use of a \textit{repetitive argument}, that is on the  
repetition of the same proof-step for an appropriate number of times. In this
way the power of repetition of a simple, single argument is revealed.
Moreover, if a bound is extracted 
from the single proof-step, then a bound is extracted from the whole proof. 
Most of the proofs included in this paper are based on repetitive arguments.
Although from such proofs we do not extract the best possible, or optimal bounds, we find them
interesting because they are somehow ``elementary''.

\subsection{The finite and infinite cases of Dickson's lemma}
\label{subsec: versions}

Dickson's lemma is the simplest theorem of the form ``a certain quasi-order is a well-quasi-order'', and it
is connected to the the theory of Gr\"{o}bner bases and the termination of Buchberger's algorithm for finding 
them (see~\cite{CLO92} and~\cite{CP99}). This was one of the first examples of how a well-quasi-order 
can be used as a technique applied to program termination (for more on this see~\cite{VCW12}). Here we
present though, the finite and infinite cases of Dickson's lemma independently from the theory of well-quasi-orders.
First we need a definition.

\begin{definition}\label{def: good} If $X, Y$ are sets, $\FXY$ denotes the set of functions from $X$ to $Y$.
Let $k \in \Nat$ such that $k \geq 1$, 
$\alpha_{1}, \ldots, \alpha_{k} \in \FNN$,  
$(i, j) \in \Nat^{2}$ such that $i < j$, and $I \subseteq \Nat$.
The pair $(i, j)$ is called a good pair of indices for $\alpha_{1}, \ldots, \alpha_{k}$, or 
$\alpha_{1}, \ldots, \alpha_{k}$ are called good on $(i, j)$,
if $\alpha_{n}(i) \leq \alpha_{n}(j)$, for every $n \in \{1, \ldots, k\}$. 
% If $(i, j)$ is a good pair  for $\alpha$, then $\alpha$ is also called a good sequence. 
% If there is no good pair of indices for $\alpha$, then $\alpha$ is called bad.
We say that $\alpha_{1}, \ldots, \alpha_{k}$ are good on $I$, or $I$ is good for $\alpha_{1}, \ldots, \alpha_{k}$,
if $\alpha_{1}, \ldots, \alpha_{k}$ are good on every pair of indices $(i, j) \in 
I^{2}$ such that $i < j$.

\end{definition}

If $k \geq 1$ and $l \geq 2$, the following finite and infinite cases of Dickson's lemma are usually 
considered. 

\begin{enumerate}
 \item $\DL(k, l)$: If $\alpha_{1}, \ldots, \alpha_{k} \in \FNN$, there
 exists $I_{l} = \{i_{1} < i_{2} < \ldots < i_{l}\} \subset \Nat$ such that $\alpha_{1}, \ldots, \alpha_{k}$ are
 good on $I_{l}$.
 \item $\DL(k, \infty)$: If $\alpha_{1}, \ldots, \alpha_{k} \in \FNN$,
there  exists $I_{\infty} = \{i_{1} < i_{2} < \ldots < i_{n} < \i_{n+1} < \ldots\} \subseteq \Nat$
such that $\alpha_{1}, \ldots, \alpha_{k}$ are
 good on $I_{\infty}$.
 \item $\DL(k, U)$: If $\alpha_{1}, \ldots, \alpha_{k} \in \Fi(U, \Nat)$,
 where $U$ is an unbounded\footnote{That is $\forall_{n \in \Nat}\exists_{m \in \Nat}(m > n \wedge m \in U)$.} 
 subset of $\Nat$, there exists an unbounded subset $I_{U}$ of $U$ such that $\alpha_{1}, \ldots, \alpha_{k}$ are
 good on $I_{U}$.
\end{enumerate}

If $\varSigma = \mathcal{P}(\Nat)$, and $S = \nset$, where $\nset := \{0, \ldots, n-1\}$, or
$S = \Nat$, and if $P(I)$, where
$I \subseteq \nset$, for some $n \in \Nat$, or $I \subseteq \Nat$, is the hereditary property 
defined as ``the sequences $\alpha_{1}, \ldots, \alpha_{k}$
are good on $I$'', then the cases $\DL(k, l)$ and $\DL(k, \infty)$ are special cases of a combinatorial theorem
$\tau$, for which no higher infinite case is meaningful.

Note that an infinite case $\DL(\infty, 2)$ of $\DL(k, 2)$ does not hold; if we consider the sequence of 
sequences
$(\alpha_{n})_{n =1}^{\infty}$, where for every $n \geq 1$ the sequence $\alpha_{n} \in \FNN$ is
$${\alpha_{n}} = {(n, n-1, \ldots, 1, n+1, n+2, n+3, \ldots)},$$
we cannot find a pair of indices which is good for all $\alpha_{n}$;
If ${n} = {\alpha_{n}(0)}$, 
then ${\alpha_{n}(n-1)} = {1}$, for every $n \geq 1$. Hence, if $i < j$, then ${\alpha_{j+1}(i)} > {1}$, 
while ${\alpha_{j+1}(j)} = {1}$, i.e., $(i, j)$ cannot be a good pair for $\alpha_{j+1}$. This is a 
simple example of a finite combinatorial proposition the infinite case of which does not hold, even 
classically\footnote{A deeper example is related to van der Waerden's theorem. 
According to it, if $\mathbb{N}$ is partitioned into two classes, then at least one of them contains
arbitrarily long arithmetic progressions. But that does not imply that an infinite  arithmetic
progression in one of them exists (see~\cite{GRS80}, p.69).}.

The original formulation of Dickson's lemma in~\cite{Di13} is equivalent to $\DL(k, U)$, which, as we
show in section~\ref{sec: inf}, is equivalent to $\DL(k, \infty)$ and cannot be constructively accepted. 
On the other hand, the finite case $\DL(k, l)$ has already
a short constructive history. As Veldman and Bezem say in~\cite{BV93}, p.210, it was John Burgess who,
in a letter from 1983, asked for a constructive proof of $\DL(2, 2)$, which is shown to be a consequence of the 
intuitionistic Ramsey theorem in~\cite{BV93}. In~\cite{Ve04} Veldman gave an elementary
inductive, constructive proof of $\DL(k, 2)$, independently from the intuitionistic Ramsey theorem
or some special intuitionistic principle. In~\cite{CP99} Coquand and Persson gave a constructive proof
of an inductive version of $\DL(k, k)$. In~\cite{BSS01} a program is extracted from a classical 
proof of $\DL(2, 2)$, by
transforming the classical proof into a constructive one through a
refined version of $A$-translation, and the proof is
implemented in MINLOG (see also~\cite{BBS02},~\cite{Ra11},~\cite{SW11}). 
From the program extraction-point of view Dickson's lemma has been studied within
systems like Mizar, Coq and ACL2 (see\cite{Sc05},~\cite{CP99},~\cite{MM03}, respectively).
In~\cite{He04} Hertz proof-mined two classical proofs of $\DL(k, 2)$ using the Dialectica interpretation.
We refer here only to direct constructive approaches to Dickson's lemma. Since the finite cases of
Dickson's lemma follow easily from Higman's lemma, a constructive proof of the latter gives a constructive
proof of the former (see~\cite{RS93}). In~\cite{Be16} it is shown that all finite cases of 
Dickson's lemma imply Higman's lemma for
words of an alphabet with two letters.

%\subsection{Extraction of bounds in combinatoric theorems}
%\label{subsec: extr}

The extraction of a bound for $\DL(k, l)$ i.e., the mining of a number
$M_{\alpha_{1}, \ldots, \alpha_{k}}(l) > 0$ out of a proof of 
$\DL(k, l)$ such that $\{i_{1} < \ldots < i_{l}\}$ is good for $\alpha_{1}, \ldots, \alpha_{k}$ and
$i_{l} \leq M_{\alpha_{1}, \ldots, \alpha_{k}}(l)$ is, surprisingly, not well-studied (neither constructively
nor classically). An exception to this is the work~\cite{BS16}, where with the use of the 
finite pigeonhole principle a strong case of $\DL(2, 2)$ is shown. It doesn't seem possible though, to
generalize this result to a method to prove strong cases of $\DL(k, 2)$, for $k > 2$. \\[1mm]
The main results of this paper are the following.

\begin{enumerate}
 \item Proposition~\ref{prp: 1l}, a strong case $\DLs(1, l)$ of $\DL(1, l)$, for every $l \geq 3$.
 \item Proposition~\ref{prp: 22}, a strong case $\DLs(2, 2)$ of $\DL(2, 2)$.
 \item Proposition~\ref{prp: 2l}, a strong case $\DLs(2, l)$ of $\DL(2, l)$, for every $l \geq 3$.
 \item We explain how our proof of Proposition~\ref{prp: 2l} generates a proof of a strong case $\DLs(k, l)$ of
 $\DL(k, l)$, where $k > 2$ and $l \geq 3$, and how the latter
 together with the proof of Proposition~\ref{prp: 22} generate
 a proof of a  strong case $\DLs(k+1, 2)$ of $\DL(k+1, 2)$.
 \item Theorem~\ref{thm-1l}, a positive formulation of the non-existence of an one step-proof of 
 $\DL(2, 2)$ from $\DL(1, l)$.
 \item Theorem~\ref{thm-2232}, a positive formulation of the non-existence of an one step-proof of 
 $\DL(3, 2)$ from $\DL(2, 2)$. 
\item Propositions~\ref{prp: DL(1, S) implies LPO} and~\ref{prp: LPO implies DL(1, S)}, which express
the constructive equivalence between $\DL(1, \infty)$ and LPO.
\end{enumerate}

Results 5 and 6 are technically the more involved and are, as far as we know, together with result 7, 
new. They are motivated by 
Corollaries~\ref{cor-1l} and~\ref{cor-2232}, respectively, which were conceived first.\\[1mm]
We work within Bishop's informal system of constructive mathematics BISH 
(see~\cite{Bi67},~\cite{BB85},~\cite{BR87}). A formal system that corresponds to BISH is CZF (see~\cite{AR10})
together with the principle of dependent choices ($\DC$), or Myhill's system CST (see~\cite{My75}).
For a recent
reconstruction of Bishop's set theory within BISH see~\cite{Pe19,Pe20,Pe21}.

\section{Strong finite cases of Dickson's lemma}
\label{sec: strong}

%We denote the set of all functions of type $\Nat \rightarrow \Nat$ by $\FNN$.

The strong form $\DLs(1, 2)$ of $\DL(1, 2)$, although trivial, is essential to the description of a bound
in all other strong cases $\DLs(k, l)$ of $\DL(k, l)$ presented here. 

\begin{proposition}[$\DLs(1, 2)$]\label{prp: 12}$\forall_{n \in \Nat}\forall_{\alpha \in \FNN}(\alpha(0) \leq n
\rightarrow \exists_{i < \alpha(0) + 1}(\alpha(i) \leq \alpha(i+1)))$.
\end{proposition}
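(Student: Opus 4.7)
The idea behind the proof is a windowed pigeonhole argument: if no good consecutive pair exists within the first $\alpha(0)+2$ values, those values would form a strictly descending chain of $\alpha(0)+2$ natural numbers starting at $\alpha(0)$, which is impossible.

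The plan is as follows. First I would fix $n \in \Nat$ and $\alpha \in \FNN$ with $\alpha(0) \leq n$, and observe that for each $i \in \Nat$ the relation $\alpha(i) \leq \alpha(i+1)$ is decidable, since $\leq$ on $\Nat$ is decidable in $\BISH$. Consequently the bounded existential statement $\exists_{i < \alpha(0)+1}(\alpha(i) \leq \alpha(i+1))$ is itself decidable, so it suffices to rule out its negation, namely the assumption that $\alpha(i) > \alpha(i+1)$ holds for every $i$ with $0 \leq i \leq \alpha(0)$.

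Under that assumption, a short induction on $k$ with $0 \leq k \leq \alpha(0)+1$ shows $\alpha(k) \leq \alpha(0) - k$: the base case $k=0$ is trivial, and the step uses $\alpha(k+1) < \alpha(k) \leq \alpha(0) - k$, hence $\alpha(k+1) \leq \alpha(0) - k - 1$. Instantiating at $k = \alpha(0)+1$ yields $\alpha(\alpha(0)+1) \leq -1$, which contradicts the fact that $\alpha$ takes values in $\Nat$. Hence the negative branch of the case split is untenable, and the positive branch provides the witness $i < \alpha(0)+1$.

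There is no real obstacle here; the point of the statement is to fix the window $[0, \alpha(0)+1]$ of length $\alpha(0)+2$ as the smallest one that forces a rise, thereby exhibiting the explicit bound $i+1 \leq \alpha(0)+1 \leq n+1$ that will serve as the base case of the bound extraction in the subsequent strong cases $\DLs(1,l)$, $\DLs(2,l)$ and $\DLs(k,l)$.
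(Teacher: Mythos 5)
Your proof is correct, but it takes a different route from the paper. The paper proves $\DLs(1,2)$ by induction on the bound $n$: the base case $n=0$ forces $\alpha(0)=0$, and in the step one either finds $\alpha(0)\leq\alpha(1)$ or observes $\alpha(1)\leq n$ and applies the inductive hypothesis to the shifted sequence $\alpha^{*}(m)=\alpha(m+1)$, returning the witness $i=j+1<\alpha(0)+1$. You instead exploit decidability of the bounded existential $\exists_{i<\alpha(0)+1}(\alpha(i)\leq\alpha(i+1))$ (legitimate in $\BISH$, since $\leq$ on $\Nat$ is decidable and the quantifier is bounded) and refute its negation: a strict descent on $0,\ldots,\alpha(0)+1$ would give $\alpha(k)\leq\alpha(0)-k$ and hence an impossible value at the end of the window. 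Your argument is more elementary in the sense that it is a single bounded search justified by a counting estimate, and it makes the optimality of the window length $\alpha(0)+2$ transparent; the paper's induction on $n$, by contrast, is structured as a recursion on tails, which is exactly the shape reused in the later repetitive arguments (e.g., $\DLs(1,l)$ is proved by iterating $\DLs(1,2)$ on tails $\alpha^{(j)}$), so the two proofs extract the same bound $M_{\alpha}(1,2)=\alpha(0)+1$ but package the witness-finding differently. One cosmetic point: the instantiation at $k=\alpha(0)+1$ yielding ``$\alpha(\alpha(0)+1)\leq -1$'' should be phrased inside $\Nat$, e.g.\ stop the induction at $k=\alpha(0)$ to get $\alpha(\alpha(0))=0$ and then note that the assumed strict drop $\alpha(\alpha(0))>\alpha(\alpha(0)+1)$ is impossible; this is a rewording, not a gap.
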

\begin{proof}
If $n = 0$, then ${\alpha(0)} = {0}$, and ${i} = {0}$ is the required index. Next 
we suppose that $\forall_{\alpha \in \FNN}(\alpha(0) \leq n
\rightarrow \exists_{i < \alpha(0) + 1}(\alpha(i) \leq \alpha(i+1)))$ and we show that
$\forall_{\alpha \in \FNN}(\alpha(0) \leq n+1
\rightarrow \exists_{i < \alpha(0) + 1}(\alpha(i) \leq \alpha(i+1)))$.
Let $\alpha \in \FNN$ such that $\alpha(0) \leq n+1$. If $\alpha(0) \leq n$, we use the inductive hypothesis. If
$\alpha(0) = n+1$, then if ${\alpha(0)} \leq {\alpha(1)}$, we get ${i} = {0}$. If
${\alpha(0)} > {\alpha(1)}$, then ${\alpha(1)} \leq {n}$. By the 
inductive hypothesis on the sequence ${\alpha^{*}}$, where $\alpha^{*}(n) = \alpha(n+1)$, for every $n \in \Nat$,
there is ${j} < \alpha^{*}(0) + 1 = \alpha(1) + 1$ such that ${\alpha^{*}(j)} \leq {\alpha^{*}(j+1)}$ i.e.,
${\alpha(j+1)} \leq {\alpha(j+2)}$, and $i = j+1 < (n+1)+1 = \alpha(0) + 1$.
\end{proof}

If $\alpha \in \FNN$, we use the notation $M_{\alpha}(1, 2) := \alpha(0) + 1$ for the bound of $\DLs(1, 2)$ that
corresponds to $\alpha$. It is immediate to see that $M_{\alpha}(1, 2)$ is an optimal bound for $\DL(1, 2)$. 
The first part of the next simple corollary of $\DLs(1, 2)$ expresses that for 
each sequence $\alpha$ we can find a good pair $(i, j)$ for $\alpha$ such that $(j -i)$ is arbitrary large.
For its last part recall that the lexicographic ordering $<_{\lex}$ on $\Nat$ is defined by
$(n_{1}, m_{1}) <_{\lex} (n_{2}, m_{2}) :\leftrightarrow (n_{1} < n_{2}) \vee
(n_{1} = n_{2} \wedge m_{1} < m_{2}),$ for every $n_{1}, n_{2}, m_{1}, m_{2} \in \Nat$.

\begin{corollary}\label{crl: cor12}
 (i) For every $\alpha \in \FNN$ and $n > 0$
 %$\forall_{\alpha \in \FNN}\forall_{n>0}
 $$\exists_{i \in \Nat}\left(i \leq 
 \sum_{j = 0}^{n-1}\alpha(j) \ \wedge \ \alpha(i) \leq \alpha(i+n)\right).$$
Moreover, the bound $\sum_{j = 0}^{n-1}\alpha(j)$ is the best possible i.e., 
there exists a sequence $\alpha$ such that $\alpha(i) > \alpha(i+n)$, for every
$i \leq M < \sum_{j = 0}^{n-1}\alpha(j)$.\\
(ii) If $n \in \Nat$, there is no sequence $\alpha \in \FNN$ such that $\forall_{k \in \Nat}(\alpha(k) < \alpha(k+1)
\wedge \alpha(k) < n)$.\\
(iii) There exists no function $f : \mathbb{N} \times \mathbb{N} \rightarrow \mathbb{N}$ such that 
$$f(n_{1}, m_{1}) < f(n_{2}, m_{2}) \leftrightarrow (n_{1}, m_{1}) <_{\lex} (n_{2}, m_{2}),$$
for every $n_{1}, n_{2}, m_{1}, m_{2} \in \Nat$.
\end{corollary}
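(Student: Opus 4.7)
The plan is to dispatch the three parts separately, using $\DLs(1,2)$ as the sole combinatorial input.

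For (i), I regard $\alpha$ as a disjoint union of $n$ arithmetic subsequences $\beta_r(k) := \alpha(r + kn)$, $r \in \{0,\dots,n-1\}$. Each $\beta_r \in \FNN$ satisfies $\beta_r(0) = \alpha(r)$, so $\DLs(1,2)$ applied to $\beta_r$ supplies some $k_r \leq \alpha(r)$ with $\beta_r(k_r) \leq \beta_r(k_r+1)$, equivalently $\alpha(i_r) \leq \alpha(i_r+n)$ for $i_r := r + k_r n$. The main step is then to show that at least one of the $n$ candidates $i_r$ is $\leq M := \sum_{j=0}^{n-1}\alpha(j)$, and this is the main technical obstacle. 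I plan to argue by contradiction, taking the $k_r$ to be minimal: if $r + k_r n > M$ for every $r \leq M$, then $k_r > (M-r)/n$, hence $k_r \geq \lfloor(M-r)/n\rfloor + 1$. Summing these inequalities and observing that $\sum_{r \leq M}(\lfloor(M-r)/n\rfloor + 1)$ is the number of integers in $\{0,1,\dots,M\}$ (partitioned by residue mod $n$), and therefore equals $M+1$, I get $\sum_r k_r \geq M+1$, contradicting $\sum_r k_r \leq \sum_r \alpha(r) = M$. Since the predicate $\alpha(i) > \alpha(i+n)$ is decidable and the search for $i$ runs over the bounded interval $\{0,\dots,M\}$, the argument is constructively acceptable: one either finds the required $i$ directly or meets the above contradiction.

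For the optimality clause of (i) I give an explicit witness. Given $M \geq 0$, set $c := \lfloor M/n \rfloor + 1$ and $\alpha(r + kn) := \max(0, c - k)$ for every $r \in \{0,\dots,n-1\}$ and $k \in \Nat$. Then $\alpha$ strictly decreases while positive on every residue class mod $n$, so $\alpha(i) > \alpha(i+n)$ holds for every $i < cn = \sum_{j=0}^{n-1}\alpha(j)$, and in particular for every $i \leq M < cn$, showing that the bound of (i) cannot be lowered.

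Part (ii) is a direct dualisation of $\DLs(1,2)$. If $\alpha$ were a counterexample, then $\beta(k) := n-1-\alpha(k)$ would lie in $\FNN$ (since $\alpha(k) \leq n-1$) and be strictly decreasing (since $\alpha$ is strictly increasing); but $\DLs(1,2)$ applied to $\beta$ delivers some $i \leq \beta(0) \leq n-1$ with $\beta(i) \leq \beta(i+1)$, contradicting the strict decrease of $\beta$. Finally, part (iii) reduces to (ii): if such $f$ existed, the sequence $g(m) := f(0,m)$ in $\FNN$ would be strictly increasing because $(0,m) <_\lex (0,m+1)$ forces $g(m) < g(m+1)$, and bounded above by $N := f(1,0)$ because $(0,m) <_\lex (1,0)$ forces $g(m) < N$; taking $n := N$ in (ii) yields the contradiction. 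The conceptual point is noticing that a single column of $\Nat^2$ under $<_\lex$ already order-embeds into the bounded initial segment $\{0,1,\dots,N-1\}$ of $\Nat$, which is exactly what (ii) forbids.
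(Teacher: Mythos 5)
Your parts (ii) and (iii) are essentially the paper's own arguments: (iii) is the same one-column reduction to (ii), and your reflection $\beta(k)=n-1-\alpha(k)$ in (ii) is just a cleaner packaging of the paper's reversed finite sequence fed to $\DLs(1,2)$ (for $n=0$ the statement is vacuous, so that edge case is harmless). Part (i), however, you prove by a genuinely different route, and it is correct. The paper applies $\DLs(1,2)$ once to the sliding window sums $\beta(m)=\sum_{j<n}\alpha(m+j)$; since $\beta(i)\leq\beta(i+1)$ telescopes to $\alpha(i)\leq\alpha(i+n)$, the bound $\beta(0)=\sum_{j=0}^{n-1}\alpha(j)$ falls out of a single invocation with no bookkeeping. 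You instead split $\alpha$ into the $n$ residue-class subsequences $\beta_r(k)=\alpha(r+kn)$, apply $\DLs(1,2)$ to each (minimality of the $k_r$ is not actually needed; $k_r\leq\alpha(r)$ already suffices), and then force one witness $i_r\leq M$ by counting: the residue classes partition $\{0,\dots,M\}$ into blocks of sizes $\lfloor (M-r)/n\rfloor+1$ summing to $M+1$, while $\sum_r k_r\leq\sum_r\alpha(r)\leq M$. Your remark that the search for $i$ is a bounded search over a decidable predicate is exactly what makes the reductio constructively legitimate, a step the paper's one-shot proof never needs. The trade-off is clear: the paper's argument is shorter and yields the bound directly, while yours costs $n$ applications plus a pigeonhole count but stays entirely inside the elementary repetitive style of the paper. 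For the optimality clause your witness is also different and somewhat more informative: the paper exhibits the single sequence $1,0,\dots,0,\dots$ (with bound $1$), whereas your family $\alpha(r+kn)=\max(0,c-k)$, $c=\lfloor M/n\rfloor+1$, shows for every prescribed $M$ that no good index below $\sum_{j=0}^{n-1}\alpha(j)$ exists, i.e.\ tightness of the bound at arbitrarily large values.
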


\begin{proof}
 (i) If $\alpha \in \FNN$ and $n > 0$, we consider the sequence $\beta \in \FNN$ defined by
$$\beta(m)=\sum_{j<n}\alpha(m+j),$$
for every $m \in \Nat$. By $\DLs(1, 2)$ there exists $i \leq \beta(0) = \sum_{j = 0}^{n-1}\alpha(j)$ such that
\begin{align*}
 \beta(i) \leq \beta (i+1) & \leftrightarrow 
\sum_{j<n}\alpha(i+j) \leq \sum_{j<n}\alpha(i+1+j) \\
& \leftrightarrow \alpha(i) \leq \alpha(i+n).
\end{align*}
In order to show the optimality of the specified 
bound\footnote{For $n=1$ we get $\sum_{j = 0}^{n-1}\alpha(j)=\alpha(0)$, 
the optimal bound of $\DLs(1, 2)$.} consider, for an arbitrary $n>0$, any infinite sequence 
$\alpha$ extending the finite sequence $1, \underbrace{0, \ldots, 0}_{n}$. 
Clearly, $\sum_{j = 0}^{n-1}\alpha(j)=1$ and $\alpha(0) > \alpha(0+n)$, while $\alpha(1) \leq \alpha(1+n)$.\\
(ii)  Suppose that such a sequence $\alpha$ exists, and consider any infinite extension of the finite sequence
$\beta(0) = n_{0} > \beta(1) = \alpha(n_{0}) > \beta(2) = \alpha(n_{0}-1) > \ldots > \beta(n_{0}+1) = \alpha(0).$
By $\DLs(1, 2)$ there exists $i < \beta(0) +1 = n_{0} + 1$ such that 
$\beta(i) \leq \beta(i+1)$, which contradicts the supposed strict monotonicity of $\alpha$.\\
(iii)  Suppose that such a function $f$ exists. By the definition of $<_{\lex}$ we get
$(0, 0) <_{\lex} (0, 1) <_{\lex} (0, 2) <_{\lex} \ldots <_{\lex} (0, n) <_{\lex} \ldots <_{\lex} (1, 0),$
while by the supposed property of $f$ we have
$f(0, 0) < f(0, 1) < f(0, 2) < \ldots < f(0, n) < \ldots < f(1, 0),$
which is impossible by (ii). 
\end{proof}

Note that by the unbounded case $\forall_{l \geq 2}(\DL(1, l))$ we get that
$\forall_{\alpha \in \FNN}\forall_{n>0}\exists_{i,j \in \Nat}(j-i \geq n \wedge \alpha(i) \leq \alpha(j)),$
since by $\DL(1, n+1)$ there exist $i_{1} < \ldots < i_{n+1}$, such that $\alpha(i_{1}) \leq \ldots 
\leq \alpha(i_{n+1})$, therefore $i_{n+1} - i_{1} \geq n$, 
and $\alpha(i_{1}) \leq \alpha(i_{n+1})$. By Corollary~\ref{crl: cor12}(i) though,
we ``strongly'' know that the distance between the elements of the good pair is exactly $n$.

\begin{proposition}[$\DLs(1, l)$]\label{prp: 1l} If $l \geq 3$ and $\alpha \in \FNN$, there exist
$i_{1}, i_{2}, \ldots, i_{l}$, and $M_{\alpha}(1, l) \in \Nat$ such that 
$$i_{1} < i_{2} < \ldots < i_{l} \leq M_{\alpha}(1, l) \ \ \ \mbox{and}$$ 
$$\alpha(i_{1}) \leq \alpha(i_{2}) \leq \ldots \leq \alpha(i_{l}),$$
where
$$M_{\alpha}(1, l) = {\sum_{{j} = {1}}^{N}M_{j}},$$
$${N} = {\alpha(i^{(1)}) + 2},$$
$${M_{1}} = {M_{\alpha}(1, l-1)},$$
$${M_{j + 1}} = {M_{\alpha^{(j)}}(1, l-1)},$$
for every ${j} \in {\{1, ..., N-1\}}$, and $M_{\alpha}(1, l-1)$ is the bound according to $\DLs(1, l-1)$ on 
$\alpha$, $\alpha^{(j)}$ is the tail of $\alpha$ starting from the index $M_{j}$, $M_{\alpha^{(j)}}(1, l-1)$
is the bound according to $\DLs(1, l-1)$ on the sequence $\alpha^{(j)}$, and $i^{(1)}$ is the 
index determined by the application of $\DLs(1, l-1)$ on $\alpha$.
%We also write ${M} = {M_{\gamma}(1, l)}$.

\end{proposition}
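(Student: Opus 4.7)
The plan is to induct on $l \geq 3$, taking $\DLs(1, 2)$ from Proposition~\ref{prp: 12} as the base case and feeding the inductive hypothesis $\DLs(1, l-1)$ into the recursive bound described in the statement. Given $\alpha \in \FNN$, I will perform $N = \alpha(i^{(1)}) + 2$ successive applications of $\DLs(1, l-1)$ to consecutive tails of $\alpha$, and then extract a chain of length $l$ by a pigeonhole-style descent argument on the first values of the resulting length-$(l-1)$ chains.

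More concretely, the inductive hypothesis applied to $\alpha$ yields indices $i^{(1)}_{1} < \ldots < i^{(1)}_{l-1} \leq M_{1} = M_{\alpha}(1, l-1)$ with $\alpha(i^{(1)}_{1}) \leq \ldots \leq \alpha(i^{(1)}_{l-1})$. Take $i^{(1)} := i^{(1)}_{1}$ and set $v_{1} := \alpha(i^{(1)})$, so that $v_{1}$ is the smallest value appearing in the round-$1$ chain. Iterate: apply $\DLs(1, l-1)$ to the tails $\alpha^{(1)}, \ldots, \alpha^{(N-1)}$, where each $\alpha^{(j)}$ is obtained by cutting off the segment of $\alpha$ consumed during round $j$, with bound $M_{j+1} = M_{\alpha^{(j)}}(1, l-1)$. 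For each round $j \in \{1, \ldots, N\}$ write $p_{j}$ for the first index, and $p^{(j)}_{2} < \ldots < p^{(j)}_{l-1}$ for the remaining indices, of the corresponding length-$(l-1)$ chain translated back into the indexing of $\alpha$, and put $v_{j} := \alpha(p_{j})$. By construction the blocks exhaust a prefix of $\alpha$ of length $\sum_{i=1}^{N} M_{i} = M_{\alpha}(1, l)$, so $p_{1} < p_{2} < \ldots < p_{N} \leq M_{\alpha}(1, l)$ and every index produced in every round is bounded by $M_{\alpha}(1, l)$.

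The heart of the argument is the following observation: the $N = v_{1} + 2$ natural numbers $v_{1}, v_{2}, \ldots, v_{N}$ cannot be strictly decreasing, since this would force $v_{1} \geq N - 1 = v_{1} + 1$. A finite search through the list therefore constructively produces indices $i < j$ with $v_{i} \leq v_{j}$. For such a pair, prepending $p_{i}$ to the round-$j$ chain gives
\[
p_{i} < p_{j} < p^{(j)}_{2} < \ldots < p^{(j)}_{l-1} \leq M_{\alpha}(1, l),
\qquad
\alpha(p_{i}) = v_{i} \leq v_{j} \leq \alpha(p^{(j)}_{2}) \leq \ldots \leq \alpha(p^{(j)}_{l-1}),
\]
which is a good chain of length $l$ for $\alpha$ obeying the required bound.

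The argument is conceptually light; the main burden is purely notational, namely the bookkeeping to ensure that the tails $\alpha^{(1)}, \ldots, \alpha^{(N-1)}$ occupy disjoint successive blocks of $\alpha$ (so that $i < j$ automatically yields $p_{i} < p_{j}$) and that indices computed inside each tail translate correctly to indices of $\alpha$ within the sum $\sum_{j=1}^{N} M_{j}$. The only genuinely combinatorial ingredient is the descent on $(v_{j})_{j=1}^{N}$, and it is fully constructive since only finitely many pairs of values need be inspected.
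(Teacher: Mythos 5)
Your proof is correct and follows essentially the same route as the paper: iterate $\DLs(1, l-1)$ on $N = \alpha(i^{(1)}) + 2$ successive tails, observe that the sequence of first values of the resulting chains cannot strictly decrease, and prepend an earlier first index to a later chain, with the bound $\sum_{j=1}^{N} M_{j}$ accounted for by the disjoint blocks. The only cosmetic difference is that the paper obtains the pair $v_{k} \leq v_{k+1}$ by formally applying $\DLs(1, 2)$ to the auxiliary sequence of first values, whereas you phrase the same step as a direct pigeonhole-style descent.
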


\begin{proof}Suppose first that $l = 3$. If we apply $\DLs(1, 2)$ on $\alpha$, we get an index
${i^{(1)}} \leq {\alpha(0)}$, such that ${\alpha(i^{(1)})} \leq {\alpha(i^{(1)}+1)}$. We write 
${M_{1}} = {M_{\alpha}(1, 2)} = {\alpha(0) + 1}$. If we apply $\DLs(1, 2)$ on the tail $\alpha^{(1)}$ 
of $\alpha$ starting from $M_{1}$, i.e., ${\alpha^{(1)}(n)} = {\alpha(M_{1} + n)}$, for every $n \in \Nat$, then 
we get an index ${i^{(2)}} \leq {\alpha^{(1)}(0)} = {\alpha(M_{1})}$, such that ${\alpha^{(1)}(i^{(2)})} 
\leq {\alpha^{(1)}(i^{(2)}+1)}$. We write ${M_{2}} = {\alpha(M_{1}) + 1}$. Repeating these steps ${N} =
{\alpha(i^{(1)}) + 2}$ number of times we get indices $i^{(1)} < i^{(2)} < \ldots < i^{(N)}$, such that the
application of $\DLs(1, 2)$ on $\alpha(i^{(1)}), \alpha(i^{(2)}), \ldots, \alpha(i^{(N)})$ gives the
existence of an index $i^{(k)}$, where ${k} \leq {\alpha(i^{(1)})}$, such that ${\alpha(i^{(k)})} \leq 
{\alpha(i^{(k+1)})}$. By the definition of the indices $i^{(1)} < i^{(2)} < \ldots < i^{(N)}$ we conclude 
that 
$${\alpha(i^{(k)})} \leq {\alpha(i^{(k+1)})} \leq {\alpha(i^{(k+1)}+1)}.$$  
The initial segment of $\alpha$ required to find the indices
$i^{(k)}, i^{(k+1)}$, and $i^{(k+1)}+1$ is ${M_{\alpha}(1, 3)} = 
{\sum_{{i} = {1}}^{N}M_{i}}$, where ${M_{1}} = {\alpha(0)+1}$, and 
for every ${i} \in {\{1, ..., N-1\}}$ we have that ${M_{j + 1}} = {\alpha(M_{j})+1}$.\\
If $l > 3$, we show that
$${\DLs(1, l)} \rightarrow {\DLs(1, l+1)}$$
by repeating $N$ number 
of times the application of $\DLs(1, l)$ on the corresponding tails of
$\alpha$, exactly as in the ${l} = {3}$ case. In this way we get indices 
$i_{1}^{(1)} < i_{1}^{(2)} < \ldots < i_{1}^{(N)}$, such that the application of
$\DLs(1, 2)$ on $\alpha(i_{1}^{(1)}), \alpha(i_{1}^{(2)}), ..., \alpha(i_{1}^{(N)})$ gives
the existence of an index $i_{1}^{(k)}$, such that ${\alpha(i_{1}^{(k)})} \leq {\alpha(i_{1}^{(k+1)})}$.
By the definition of the indices $i_{1}^{(1)} < i_{1}^{(2)} < \ldots < i_{1}^{(N)}$ we conclude that 
$${\alpha(i_{1}^{(k)})} \leq {\alpha(i_{1}^{(k+1)})} \leq {\alpha(i_{2}^{(k+1)})} \leq \ldots \leq
{\alpha(i_{l}^{(k+1)})}.$$ 
\end{proof}

Within the above proof the rightmost pair of the indices on which $\alpha$ weakly increases
is a pair of consecutive numbers. Generally, these indices are not consecutive. E.g., 
$$          \alpha(n) = \left\{ \begin{array}{ll}
                      0               &\mbox{, if ${n} = {2k}$}\\
                      1               &\mbox{, if ${n} = {2k+1}.$}
                     \end{array}
            \right. $$
doesn't weakly increase on any triad of consecutive numbers.

\begin{definition}\label{def: mono} Let $A$ be an inhabited set and $n \geq 1$. A \textit{coloring} of $A$
with $n$ colors,
or an \textit{$n$-coloring} of $A$, is a function $\chi : A \rightarrow \nset$.
% where $\nset := \{0, 1, ..., n-1\}}$.
If $a_{1}, a_{2} \in A$, the set $\{a_{1}, a_{2}\}$ is called a 
\textit{monochromatic pair} under $\chi$, if ${\chi(a_{1})} = {\chi(a_{2})}$.
A subset $B$ of $A$ is called \textit{monochromatic} under $\chi$, if every two 
elements of $B$ form a monochromatic pair. The notation $\PH(n, m, l),$ where ${n} \in {\mathbb{N}}$ 
and ${m, l} \in {{\mathbb{N}} \cup {\{\Nat\}}}$, expresses that if $\chi$ is an $n$-coloring of a
sequence of $A$ of length $m$, then this sequence contains a monochromatic subsequence $B$ of length $l$.
%Clearly, $B$ corresponds to the ``large'' subsystem of $A$ which possesses a higher degree of
%organization (common coloring of its elements) with respect to the property of (the not generally common)
%coloring of $A$. 
 
\end{definition}

Consequently, the case $\mbox{PH}(2, \Nat, l)$ of the pigeonhole principle, where 
$l \in \Nat$ and $l \geq 2$, says that if
$\chi$ is a $2$-coloring of $\{\alpha_{n} : {n} \in {\mathbb{N}}\} \subseteq {A}$, 
then $\{\alpha_{n} : {n} \in {\mathbb{N}}\}$ has a monochromatic subsequence of length $l$. 

%The unbounded Stolzenberg Principle, or $\forall{_{l}}(\mbox{PH}(2, \infty, l))$, ,
%for every ${l} \in {\mathbb{N}}$.

\begin{proposition}\label{prp: stolz} ${\forall_{l \geq {2}}({\DL(1, l)} \rightarrow {\PH(2, \Nat, l)}})$.
\end{proposition}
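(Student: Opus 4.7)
Given a $2$-coloring $\chi$ of $\{\alpha_n : n \in \Nat\}$, I view $\chi$ as an element of $\FNN$ via the inclusion $\twoset = \{0, 1\} \subseteq \Nat$. The strategy is to iterate $\DL(1, l)$ at most $l$ times on progressively longer tails of $\chi$; at each stage the decidability of equality in $\twoset$ lets me either terminate with a length-$l$ monochromatic subsequence of colour $0$, or extract one further confirmed index of colour $1$ and proceed.

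Concretely, set $n_0 := 0$, and for $k = 1, \ldots, l$ apply $\DL(1, l)$ to the shifted tail $m \mapsto \chi(m + n_{k-1})$, obtaining indices $i_1^{(k)} < \cdots < i_l^{(k)}$ with the corresponding values $\chi(n_{k-1} + i_p^{(k)})$ weakly increasing in $p$. Writing $j_p^{(k)} := n_{k-1} + i_p^{(k)}$, if $\chi(j_l^{(k)}) = 0$ then by weak monotonicity all $\chi(j_p^{(k)})$ equal $0$, so $\{j_1^{(k)}, \ldots, j_l^{(k)}\}$ is the required length-$l$ monochromatic subsequence; otherwise $\chi(j_l^{(k)}) = 1$, and I set $n_k := j_l^{(k)} + 1$ and move on. If the procedure completes all $l$ stages without early return, then $j_l^{(1)} < j_l^{(2)} < \cdots < j_l^{(l)}$ (strictly increasing since $j_l^{(k+1)} \geq n_k > j_l^{(k)}$) all have $\chi$-value $1$, giving a monochromatic subsequence of colour $1$ of length $l$.

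The argument is fully constructive in $\BISH$: each stage uses one application of $\DL(1, l)$ plus a decidable case split on a value in $\twoset$, and the loop is a priori bounded by $l$. The only obstacle is the bookkeeping needed to ensure that indices discovered at later stages strictly exceed those from earlier stages, and this is handled automatically by the shift by $n_{k-1}$; beyond this, nothing is needed besides $\DL(1, l)$ itself and decidability of equality in $\twoset$.
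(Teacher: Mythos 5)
Your argument is correct and is essentially the paper's own proof: apply $\DL(1, l)$ to the integer-valued sequence of colours (formally $\chi \circ \alpha$), terminate if the last value is $0$ since weak monotonicity then forces a colour-$0$ block of length $l$, otherwise record one confirmed colour-$1$ index and repeat on a strictly later tail, so that after at most $l$ rounds either case yields a monochromatic subsequence of length $l$. Your explicit bookkeeping with the shifts $n_{k}$ just makes precise the paper's phrase ``repeat the above steps at most $l$ number of times''.
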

\begin{proof}  Suppose that ${l} \geq {2}$, $\alpha \in \FNN$ and
$\chi$ is a $2$-coloring of $\{\alpha_{n} : {n} \in {\mathbb{N}}\}$. By $\DL(1, l)$ on
$\chi \circ {\alpha}: \mathbb{N} \rightarrow 2$ there are indices $i_{1} < i_{2} < ... < i_{l}$, 
such that ${\chi(\alpha_{i_{1}})} \leq {\chi(\alpha_{i_{2}})} \leq \ldots 
\leq {\chi(\alpha_{i_{l}})}$. If ${\chi(\alpha_{i_{l}})} = {0}$, therefore 
${\chi(\alpha_{i_{1}})} = {\chi(\alpha_{i_{2}})} = \ldots = {\chi(\alpha_{i_{l}})} = {0}$, 
the sequence $\alpha_{i_{1}}, \alpha_{i_{2}}, \ldots, \alpha_{i_{l}}$ is a monochromatic subsequence of $\alpha$ 
of length $l$. If ${\chi(\alpha_{i_{l}})} = {1}$, then we repeat the previous step on the tail
$\alpha_{i_{l}+1}, \alpha_{i_{l}+2}, \ldots,$ of $\alpha$. By $\DL(1, l)$, there are indices 
$n_{1} < n_{2} < ... < n_{l}$, such that
${\chi(\alpha_{i_{l}+n_{1}})} \leq {\chi(\alpha_{i_{l}+n_{2}})} \leq \dots \leq {\chi(\alpha_{i_{l}+n_{l}})}$.
If ${\chi(\alpha_{i_{l}+n_{l}})} = {0}$, then we get
a monochromatic subsequence of $\alpha$ of length $l$. If ${\chi(\alpha_{i_{l}+n_{l}})} = {1}$, 
we repeat the same procedure. It suffices to repeat the above steps at most $l$ number of 
times to find a monochromatic subsequence of $\alpha$ of length $l$.     
 \end{proof}

It is easy to provide a bound for $\PH(2, \Nat, l)$ based on the bounds determined by $\DLs(1, l)$ on
the sequences considered in the previous proof.

\begin{proposition}[$\DLs(2, 2)$]\label{prp: 22} If $\alpha, \beta \in \FNN$, there exist $i, j$ and 
$M_{\alpha, \beta}(2, 2) \in \Nat$ such that
$$i < j < M_{\alpha, \beta}(2, 2), \ \ \ \mbox{and}$$
$$\alpha(i) \leq \alpha(j) \  \wedge \ \beta(i) \leq \beta(j)),$$ 
where $${M_{\alpha, \beta}(2, 2)} = {\sum_{{j} = {1}}^{K}M_{j}},$$
$${1} \leq {K} \leq {N},$$
$${N} = {\alpha(i_{1}^{(1)}) + 2},$$
$${M_{1}} = {M_{\alpha}(1, 3)},$$ 
$${{\beta(i_{1}^{(1)})} \leq {1}} \rightarrow {{K} = {1}},$$
$${{\beta(i_{1}^{(1)})} \geq {2}} \rightarrow {{M_{2}} = {M_{\alpha^{(1)}}(1, \beta(i_{1}^{(1)})+1})},$$ 
$i_{1}^{(1)}$ is the first index of the application of $\DLs(1, 3)$ on $\alpha$ and $\alpha^{(1)}$
is the tail of $\alpha$ starting from index $M_{1}$. If ${1} \leq {j} \leq {N-1}$, 
then $${{\beta(i_{1}^{(j+1)})} \leq {\beta(i_{1}^{(j)})}-1} \rightarrow {{K} = {j+1}},$$
$${{\beta(i_{1}^{(j+1)})} \geq {\beta(i_{1}^{(j)})}} \rightarrow {{M_{j+1}} = 
{M_{\alpha^{(j)}}(1, \beta(i_{1}^{(j+1)})+1})},$$
and 
$i_{1}^{(j+1)}$ is the first index of the application of $\DLs(1, \beta(i_{1}^{(j)} +1))$ on $\alpha^{(j)}$,
where $\alpha^{(j)}$ is the tail of $\alpha$ starting from index $M_{j}$.
%We also write ${M} = {M_{\gamma, \delta}(2, 2)}$.
\end{proposition}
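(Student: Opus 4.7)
My plan is to mimic the iterative strategy of $\DLs(1, l)$ (Proposition~\ref{prp: 1l}) while simultaneously tracking the values of $\beta$ at the ``leading indices'' of each block. First I would apply $\DLs(1, 3)$ to $\alpha$ to obtain three indices $i_1^{(1)} < i_2^{(1)} < i_3^{(1)}$, all bounded by $M_1 = M_\alpha(1, 3)$, on which $\alpha$ is weakly increasing. The goal is then to extend or pivot this configuration so that a weakly $\beta$-increasing pair appears among $\alpha$-good indices.

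The natural base case is $\beta(i_1^{(1)}) \leq 1$: the triad $\beta(i_1^{(1)}), \beta(i_2^{(1)}), \beta(i_3^{(1)})$ has first entry at most $1$ and length $3 \geq \beta(i_1^{(1)}) + 2$, so $\DLs(1, 2)$ supplies an index $k \in \{1, 2\}$ with $\beta(i_k^{(1)}) \leq \beta(i_{k+1}^{(1)})$, and $(i_k^{(1)}, i_{k+1}^{(1)})$ is good. This is the case $K = 1$ of the bound formula.

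Otherwise I would iterate. Suppose inductively that after $j$ stages we have constructed blocks of lengths $M_1, \dots, M_j$ and leading indices $i_1^{(1)}, \dots, i_1^{(j)}$ along which $\beta$ is nondecreasing. At stage $j+1$ I pass to the tail $\alpha^{(j)}$ starting at $M_j$ and apply $\DLs(1, \beta(i_1^{(j)}) + 1)$ to it, producing $\beta(i_1^{(j)}) + 1$ new indices $i_1^{(j+1)} < \cdots < i_{\beta(i_1^{(j)})+1}^{(j+1)}$ inside the segment of length $M_{j+1} = M_{\alpha^{(j)}}(1, \beta(i_1^{(j)}) + 1)$, on which $\alpha$ is weakly increasing. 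Now examine $\beta(i_1^{(j+1)})$: if $\beta(i_1^{(j+1)}) \leq \beta(i_1^{(j)}) - 1$, then the length $\beta(i_1^{(j)}) + 1$ of the new block is at least $\beta(i_1^{(j+1)}) + 2$, and $\DLs(1, 2)$ applied to $\beta(i_1^{(j+1)}), \beta(i_2^{(j+1)}), \dots$ produces an index $k$ with $\beta(i_k^{(j+1)}) \leq \beta(i_{k+1}^{(j+1)})$; the pair $(i_k^{(j+1)}, i_{k+1}^{(j+1)})$, which is automatically $\alpha$-good, completes the proof with $K = j+1$. Otherwise $\beta(i_1^{(j+1)}) \geq \beta(i_1^{(j)})$ and I continue to stage $j+2$.

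The termination step, which I expect to be the main obstacle, depends crucially on the choice $N = \alpha(i_1^{(1)}) + 2$. If the iteration never stops early, then after $N$ stages the leading indices $i_1^{(1)}, \dots, i_1^{(N)}$ form a sequence along which $\beta$ is nondecreasing by construction. The associated $\alpha$-values $\alpha(i_1^{(1)}), \dots, \alpha(i_1^{(N)})$ have length $N = \alpha(i_1^{(1)}) + 2$ and start with $\alpha(i_1^{(1)})$, so $\DLs(1, 2)$ yields an index $k < \alpha(i_1^{(1)}) + 1$ with $\alpha(i_1^{(k)}) \leq \alpha(i_1^{(k+1)})$, and the pair $(i_1^{(k)}, i_1^{(k+1)})$ is simultaneously $\beta$-good, producing the case $K = N$. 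The bound $M_{\alpha, \beta}(2, 2) = \sum_{j=1}^{K} M_j$ then records the minimal initial segment of $\Nat$ accommodating all invocations of $\DLs(1, \cdot)$, and the case analysis on $K \in \{1, \dots, N\}$ in the statement simply records which stopping condition was triggered first. The delicate bookkeeping point is to apply $\DLs(1, \beta(i_1^{(j)}) + 1)$ at stage $j+1$ using the \emph{previous} $\beta$-value, so that the block is large enough to later run $\DLs(1, 2)$ on its $\beta$-restriction whenever the stopping condition fires.
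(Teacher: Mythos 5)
Your proposal is correct and follows essentially the same route as the paper's proof: apply $\DLs(1,3)$ to $\alpha$, then iterate $\DLs(1,\beta(i_{1}^{(j)})+1)$ on tails of $\alpha$, stopping either when $\beta(i_{1}^{(j+1)})$ drops (using $\DLs(1,2)$ inside the block) or after $N=\alpha(i_{1}^{(1)})+2$ stages (using $\DLs(1,2)$ on the leading $\alpha$-values, where $\beta$ is already nondecreasing). The bookkeeping of the bound $\sum_{j=1}^{K}M_{j}$ and the case analysis on $K$ match the paper's argument.
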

\begin{proof} We show that 
$$\forall_{l \geq {2}}(\DLs(1, l)) \rightarrow \DLs(2, 2),$$ 
hence by Proposition~\ref{prp: 1l} we get a proof 
of $\DLs(2, 2)$. Applying $\DLs(1, 3)$ on $\alpha$ we find indices $i_{1}^{(1)} <  
i_{2}^{(1)} < i_{3}^{(1)}$, for which ${\alpha(i_{1}^{(1)})} \leq {\alpha(i_{2}^{(1)})} \leq
{\alpha(i_{3}^{(1)})}$, based on the initial segment of $\alpha$ of length ${M_{1}} = {M_{\alpha}(1, 3)}$.
We also consider the finite sequence ${\beta(i_{1}^{(1)})}, {\beta(i_{2}^{(1)})}, {\beta(i_{3}^{(1)})}$.

Suppose that ${\beta(i_{1}^{(1)})} \leq {1}$. If we form the sequence
${\gamma(0)} = {\beta(i_{1}^{(1)})}, {\gamma(1)} = {\beta(i_{2}^{(1)})}, {\gamma(2)} = {\beta(i_{3}^{(1)})}$
and extend it in any way we like, then, by $\DLs(1, 2)$ there exists $j \leq {\gamma(0)} \leq {1}$, 
such that ${{\gamma(j)}
\leq {\gamma(j+1)}} \leftrightarrow {{\beta(i_{j+1}^{(1)})} \leq
{\beta(i_{j+2}^{(1)})}}$, while ${\alpha(i_{j+1}^{(1)})} \leq
{\alpha(i_{j+2}^{(1)})}$ also holds. Hence, in case ${\beta(i_{1}^{(1)})}
\leq {1}$, we can find a pair of indices for which $\DLs(2, 2)$ is satisfied, and then trivially ${K} = {1}$.

If ${\beta(i_{1}^{(1)})} = {\mu} \geq {2}$, we consider the tail $\alpha^{(1)}$ of $\alpha$ which 
starts from index $M_{1}$.
By $\DLs(1, {\mu}+{1})$ on $\alpha^{(1)}$ we find a finite sequence of indices $i_{1}^{(2)} <
i_{2}^{(2)} < \ldots < i_{{\mu}+{1}}^{(2)}$, for which 
$i_{1}^{(1)} <  i_{2}^{(1)} < i_{3}^{(1)} < i_{1}^{(2)} < i_{2}^{(2)} < \ldots < i_{{\mu}+{1}}^{(2)}$, such that
${\alpha^{(1)}(i_{1}^{(2)})} \leq {\alpha^{(1)}(i_{2}^{(2)})} \leq \ldots \leq
{\alpha^{(1)}(i_{{\mu}+1}^{(2)})}$. Of course, $\alpha$ also weakly increases on these indices. 
Considering $\beta(i_{1}^{(2)})$ 
we work as follows:

If ${\beta(i_{1}^{(2)})} \leq {{\mu}-1}$, then we can find the required
pair of indices using $\DLs(1, 2)$. If $\beta(i_{1}^{(2)}) \geq {\mu} = {\beta(i_{1}^{(1)})}$, we repeat
the previous step working with the tail $\alpha^{(2)}$ of $\alpha$ which starts from index
${M_{2}} = {M_{\alpha^{(1)}}(1, \beta(i_{1}^{(1)})+1)}$.

If we are at step $j$, where ${1} \leq {j} \leq {N-1}$, we find index $i_{1}^{(j+1)}$, which is 
the first index of the application of $\DLs(1, \beta(i_{1}^{(j)} +1))$ on $\alpha^{(j)}$, the tail 
of $\alpha$ starting from the index $M_{j}$.

If  ${{\beta(i_{1}^{(j+1)})} \leq {\beta(i_{1}^{(j)})}-1}$, then, by $\DLs(1, 2)$, the required pair
of indices is found, and ${{K} = {j+1}}$.

If ${{\beta(i_{1}^{(j+1)})} \geq {\beta(i_{1}^{(j)})}}$, we repeat the procedure at most 
${N} = {\alpha(i_{1}^{(1)}) + 2}$ number of times. Then indices $i_{1}^{(1)} < i_{1}^{(2)} < \ldots
< i_{1}^{(N)}$ will have been constructed for which, by the previous constructions, we have that  
$${\beta(i_{1}^{(1)})} \leq {\beta(i_{1}^{(2)})} \leq \ldots \leq
{\beta(i_{1}^{(N)})}.$$ 
Applying $\DLs(1, 2)$ on any extension of the finite sequence 
${\alpha(i_{1}^{(1)})}, {\alpha(i_{1}^{(2)})}, \ldots, {\alpha(i_{1}^{(N)})}$, we find a pair 
of indices on which $\alpha$ weakly increases. Since $\beta$ already weakly increases on them, 
we have found the required pair based on an initial segment of $\alpha, \beta$ of length at most 
${M} = {\sum_{{j} = {1}}^{K}M_{j}}$.
\end{proof}

%Next we describe the step from $\DLs(2, 2)$ to $\DLs(3, 2)$, while the general step
%from $\DLs(k, 2)$ to $\DLs(k+1, 2)$ is shown similarly.

\begin{proposition}[$\DLs(2, l), l\geq 3$]\label{prp: 2l} If $l \geq 3$ and $\alpha, \beta \in \FNN$, there exist
$i_{1}, i_{2}, \ldots, i_{l}$, and $M_{\alpha, \beta}(2, l) \in \Nat$ such that 
$$i_{1} < i_{2} < \ldots < i_{l} \leq M_{\alpha, \beta}(2, l) \ \ \ \mbox{and}$$ 
$$\alpha(i_{1}) \leq \alpha(i_{2}) \leq \ldots \leq \alpha(i_{l}),$$
$$\beta(i_{1}) \leq \beta(i_{2}) \leq \ldots \leq \beta(i_{l}),$$
where
$$M_{\alpha, \beta}(2, l) = {\sum_{{k} = {1}}^{K}M^{(k)}},$$
$$ {1} \leq {K} \leq {\Lambda},$$
$${\Lambda} = {\alpha(i^{(1)})+1},$$
and $i^{(1)}$ is the first index determined by the application of $\DLs(1, l)$ on the sequence 
${\alpha^{*}(n)} = {\alpha(i_{n})}$, where the indices $i_{n}$ are formed as follows: 
$i_{1}$ is the first component of the common good pair resulting from the application of
$\DLs(2, 2)$ on $\alpha, \beta$ requiring the initial segment of $\alpha, \beta$ of length
${M_{1}} = {M_{\alpha, \beta}(2, 2)}$, and $i_{n+1}$ is the first component of the common good pair 
resulting from the application of $\DLs(2, 2)$ on $\alpha^{(n)}, \beta^{(n)}$, which are the tails
of $\alpha, \beta$ starting from index $M_{n}$. Moreover,
$${M^{(1)}} = {\sum_{{j} = {1}}^{N_{1}}M_{j}^{(1)}},$$
$${M_{1}^{(1)}} = {M_{\alpha, \beta}(2, l)},$$
$${M_{j+1}^{(1)}} = {M_{\alpha^{(j)}, \beta^{(j)}}(2, l)},$$
while 
$${{\beta(i_{1}^{(1)})} \leq {1}} \rightarrow {{K} = {1}},$$
$${{\beta(i_{1}^{(1)})} \geq {2}} \rightarrow {{M^{(2)}} = {\sum_{{j} = {1}}^{N_{2}}M_{j}^{(2)}}},$$
where ${N_{2}} = {M_{{\alpha^{*}}^{(1)}}(1, \beta(i^{(1)})+1)}$, ${\alpha^{*}}^{(1)}$
is the tail of $\alpha^{*}$ starting from index $i_{N_{1}}$,
$${M_{1}^{(2)}} = {M_{\alpha_{2}^{(1)}, \beta_{2}^{(1)}}(2, l)}, 
{M_{j+1}^{(2)}} = {M_{\alpha_{2}^{(j)}, \beta_{2}^{(j)}}(2, l)}.$$
where $\alpha_{2}^{(1)}, \beta_{2}^{(1)}$ are the tails of $\alpha, \beta$
starting from index $M^{(1)}$ and $\alpha_{2}^{(j)}, \beta_{2}^{(j)}$ are the tails of
$\alpha, \beta$, respectively, starting from index $M_{j}^{(2)}$. 
If ${2} \leq {k} \leq {{\Lambda}-1}$, then $M^{(k+1)}$ is defined through
$M_{j}^{(k)}$'s, ${1} \leq {j} \leq {N_{k}}$, in a similar way.

\end{proposition}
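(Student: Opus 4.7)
The strategy extends the argument of Proposition~\ref{prp: 22}, replacing its inner applications of $\DLs(1, 2)$ (used there to extract a weakly increasing pair) by applications of $\DLs(1, l)$ from Proposition~\ref{prp: 1l} (used here to extract a weakly increasing $l$-tuple). First I iterate $\DLs(2, 2)$ on successive tails of $\alpha, \beta$, producing indices $i_1 < i_2 < \cdots$ each of which is the first component of a common good pair in its respective tail, with cumulative bound $\sum_j M_j^{(1)}$. After $N_1 = M_{\alpha^*}(1, l)$ such steps, where $\alpha^*(n) := \alpha(i_n)$, an application of Proposition~\ref{prp: 1l} to $\alpha^*$ yields sub-indices $n_1 < \cdots < n_l$ with $\alpha(i_{n_1}) \leq \cdots \leq \alpha(i_{n_l})$. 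Setting $i^{(1)} := i_{n_1}$, this produces the first ``block'' of $l$ indices on which $\alpha$ weakly increases, located within an initial segment of length $M^{(1)}$.

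Next I examine the $\beta$-values on this block. If $\beta$ already weakly increases on it, or if $\beta(i^{(1)})$ is small enough that an application of $\DLs(1, \beta(i^{(1)}) + 1)$ on an augmented block forces a $\beta$-weakly-increasing sub-$l$-tuple (on which $\alpha$ continues to weakly increase), then $K = 1$ suffices. Otherwise, in direct analogy with the outer loop in the proof of Proposition~\ref{prp: 22}, I restart the construction on the tails $\alpha_2^{(\cdot)}, \beta_2^{(\cdot)}$ beyond $M^{(1)}$, producing a second block with first index $i^{(2)}$, and I iterate further. Either some block eventually admits an internal $\beta$-good sub-$l$-tuple, in which case I am finished; or the sequence of first-indices $i^{(1)}, i^{(2)}, \ldots$ accumulates $\beta$-values $\beta(i^{(1)}) \leq \beta(i^{(2)}) \leq \cdots$ that are already weakly increasing, and then one more application of $\DLs(1, l)$ to $\alpha(i^{(1)}), \alpha(i^{(2)}), \ldots$ extracts the required $l$-tuple on which both $\alpha$ and $\beta$ weakly increase. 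The descent argument that forces termination after at most $\Lambda = \alpha(i^{(1)}) + 1$ outer iterations is exactly the one used in Proposition~\ref{prp: 22}.

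The main obstacle is bookkeeping rather than conceptual: the bound $M_{\alpha, \beta}(2, l) = \sum_{k = 1}^K M^{(k)}$ is nested several layers deep, each $M^{(k)}$ itself being a sum of $N_k$ bounds of the form $M_{\alpha^{(j)}, \beta^{(j)}}(2, 2)$ supplied by Proposition~\ref{prp: 22}, and each $N_k$ depending on a value of $\beta$ produced by the previous outer stage. Verifying that these nested sums indeed suffice for the construction to terminate proceeds exactly as in the proof of Proposition~\ref{prp: 22}, with $l$-tuples replacing pairs throughout. No idea beyond those already employed for $\DLs(1, l)$ and $\DLs(2, 2)$ is needed.
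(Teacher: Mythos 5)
There is a genuine gap, and it sits exactly where you write that the verification ``proceeds exactly as in the proof of Proposition~\ref{prp: 22}, with $l$-tuples replacing pairs throughout''. The paper's construction keeps the \emph{whole} good pairs $(i_n, j_n)$ produced by the iterated applications of $\DLs(2,2)$ on tails, and at the level of the derived sequences $\alpha^*(n)=\alpha(i_n)$, $\beta^*(n)=\beta(i_n)$ it only ever needs a \emph{pair}: one application of $\DLs(2,2)$ to $(\alpha^*,\beta^*)$ gives $s<t$ with $\alpha(i_s)\le\alpha(i_t)$ and $\beta(i_s)\le\beta(i_t)$, and the required triple (case $l=3$) is the spliced tuple $(i_s,i_t,j_t)$; for general $l$ one iterates the case $l-1$, stores the $(l-1)$-tuples, and prepends $i_s$ to the $t$-th stored tuple. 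You discard the second components $j_n$, so your derived-level construction has to output a full $l$-tuple good for both $\alpha$ and $\beta$, and the vertical descent of Proposition~\ref{prp: 22} does not deliver that: it is an argument that produces pairs.

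Concretely: (a) in your within-block branch, Proposition~\ref{prp: 22} succeeds because a $\beta$-good pair inside an $\alpha$-good block needs only $\beta(\mbox{first index})+2$ terms, a quantity the descent on $\beta(i^{(k)})$ controls; a $\beta$-good $l$-sub-tuple needs the block to be at least as long as the nested bound of $\DLs(1,l)$ evaluated on the $\beta$-values along the block, which depends on the whole run of those values and is not controlled by $\beta(i^{(1)})$. Moreover, applying $\DLs(1,\beta(i^{(1)})+1)$ yields a weakly increasing subsequence of length $\beta(i^{(1)})+1$, which has length at least $l$ only when $\beta(i^{(1)})\ge l-1$, i.e.\ when $\beta(i^{(1)})$ is large, not ``small enough''. (b) In your accumulation branch the last step is $\DLs(1,l)$ applied to $\alpha(i^{(1)}),\alpha(i^{(2)}),\ldots$, which requires as many outer blocks as the $\DLs(1,l)$-bound of that sequence, not $\Lambda=\alpha(i^{(1)})+1$ of them; the count $\alpha(i^{(1)})+1$ is the $\DLs(1,2)$ bound and is adequate in the paper's scheme precisely because only a pair is extracted at the derived level there. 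So ``replace pairs by $l$-tuples'' is not bookkeeping: extracting an $l$-tuple that is $\beta$-good from an $\alpha$-good supply is essentially $\DLs(2,l)$ itself, one level down. The repair is to reinstate the splice: keep the stored pairs (or stored $(l-1)$-tuples), apply the two-sequence case $\DLs(2,2)$ to $(\alpha^*,\beta^*)$, and extend the resulting derived good pair by the stored data, as the paper does.
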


\begin{proof}For simplicity we show here only the case ${l} = {3}$. Applying $\DLs(2, 2)$ on $\alpha, \beta$
using their initial segment of length ${M_{1}^{(1)}} = {M_{\alpha, \beta}(2, 2)}$ we find a common good pair of
indices $(i_{1}, j_{1})$ for them. Then we apply $\DLs(2, 2)$ on $\alpha^{(1)}, \beta^{(1)}$, 
the tails of $\alpha, \beta$ starting from index $M_{1}$, using the initial segment of them of
length ${M_{2}^{(1)}} = {M_{\alpha^{(1)}, 
\beta^{(1)}}(2, 2)}$, and we find a common good pair of indices $(i_{2}, j_{2})$ for them. We
repeat this procedure enough number of times so that the sequences ${\alpha^{*}(n)} = {\alpha(i_{n})}$,
${\beta^{*}(n)} = {\beta(i_{n})}$ reach a common good pair of indices $(i_{s}, i_{t})$ for them. 
Then $(i_{s}, i_{t}, j_{t})$ is the required good triplet for $\alpha, \beta$. In order to find the pair
$(i_{s}, i_{t})$ we need to repeat the initial procedure so many times so that for the sequences
$\alpha^{*}, \beta^{*}$ we can find a common good pair of indices. 
It is clear that $M, \Lambda$ and $i^{(1)}$ as defined above for the case $l = 3$ 
determine the bound which corresponds to this proof. 
\end{proof}

The formulation of $\DLs(3, 2)$ has a complexity similar to that of the formulation of $\DLs(2, 3)$,
while its proof 
follows the pattern of the proof of $\DLs(2, 2)$. If $\alpha, \beta, \gamma$ are given sequences,
then applying $\DLs(2, 3)$ on $\alpha, \beta$ using their initial segment of length 
${M_{1}} = {M_{\alpha, \beta}(2, 3)}$ we find indices $i_{1}^{(1)} < i_{2}^{(1)} < i_{3}^{(1)} \leq M_{1}$, 
such that both $\alpha$ and $\beta$ weakly increase on them. If ${\gamma(i_{1}^{(1)})} \leq {1}$, we are done,
while if not, we apply $\DLs(2, \mu + 1)$ on $\alpha^{(1)}, \beta^{(1)}$, the tails of $\alpha, \beta$
starting from the index $M_{1}$, where ${\mu} = {\gamma(i_{1}^{(1)})}$. Let $i_{1}^{(2)}$ be 
the first index of this application. If ${\gamma(i_{1}^{(2)})} \leq {\mu - 1}$ we stop, while if 
${\gamma(i_{1}^{(1)})} \geq {\mu}$ we repeat the procedure. At any step,
either we have found the required pair, or the sequence ${\gamma(i_{1}^{(1)})} \leq {\gamma(i_{1}^{(2)})}
\leq \ldots,$ is formed. 
Our algorithm of finding the required pair terminates with bound
${M} = {M_{\alpha, \beta, \gamma}(3, 2)}$, where $M$ is the bound within which sequences
$\alpha(i_{1}^{(1)}), \alpha(i_{1}^{(2)}), \ldots,$ and $\beta(i_{1}^{(1)}), \beta(i_{1}^{(2)}), \ldots,$
have a common good pair of indices. Consequently, this is a good pair for $\gamma$ too. 
To determine $M$ we work in a completely similar way to the determination of $M_{\alpha, \beta}(2, 3)$.

If $k > 3$, the formulations of ${\DLs(k, 2)}$, and of ${\DLs(k, l)}$, for every $l \geq {3}$, are
similar to the formulations of $\DLs(3, 2)$ and of $\DLs(3, k)$, respectively. The 
general proofs 
$${\DLs(k, 2)} \rightarrow {\DLs(k, l)},$$ 
and 
$$\forall_{l \geq {2}}(\DLs(k, l)) \rightarrow {\DLs(k+1, 2)}$$
are similar to the proofs of Propositions~\ref{prp: 2l} and the proof of $\DLs(3, 2)$, respectively.
Although we avoid here the cumbersome details of the general case, we may conclude the following
regarding our proof of $\DLs(k, l)$:
\begin{enumerate}
\item It is based on two simple repetitive arguments, a ``horizontal'' one, found in the proof 
of the implication ${\DLs(k, l)} \rightarrow {\DLs(k, l+1)}$, and a ``vertical'' one, found in the proof 
of the implication $\forall_{l \geq {2}}(\DLs(k, l)) \rightarrow {\DLs(k+1, 2)}$. Both arguments depend 
on the simplest case
$\DLs(1, 2)$, something which is not the case in other constructive proofs of the finite 
cases of Dickson's lemma (e.g., like the ones in~\cite{Ve04},~\cite{BS16}).
%which either use stronger extra principles than the ones used in the corresponding proofs of $\DL(1, 2)$
%(classical proofs), or there is no reliance to $\DL(1, 2)$ at all .
\item It provides a method to extract a bound ${M_{{\alpha_{1}, \ldots, \alpha_{k}}}(k, l)}$ for $\DL(k, l)$.
%that corresponds to $\DLs(k, l)$.
\item Our proof of $\forall_{l \geq {2}}(\DLs(k, l)) \rightarrow {\DLs(k+1, 2)}$ 
is the constructive analogue of the constructively non-accepted proof 
$${\DL(k, \infty)} \rightarrow {\DL(k+1, l)},$$
according to which one first applies the case $\DL(k, \infty)$ on $\alpha_{1}, \ldots, \alpha_{k}$ to determine some 
$I_{\infty} \subseteq \Nat$, which is good for $\alpha_{1}, \ldots, \alpha_{k}$, and then applies
$\DL(1, l)$ on the subsequence of $\alpha_{k+1}$ determined by $I_{\infty}$. 
Here we replaced $\DL(k, \infty)$ by $\forall_{l \geq {2}}(\DLs(k, l))$.    
\end{enumerate}

\section{One-step unprovability results}
\label{sec: un}

The results included in this section are, as far as we know new, and they are motivated by our intuition 
that it is not possible to prove
$\DL(k+1, 2)$ from a finite number of cases $\DL(k, l)$ i.e., from ``less information'' than 
$\forall_{l \geq 2}(\DL(k, l))$. First we show that no single case $\DL(1, l)$ proves $\DL(2, 2)$ ``directly 
in one step''. We give a simple example to explain what we mean: if we define 
$$(n_{1}, n_{2}) \leq (m_{1}, m_{2})) :\leftrightarrow n_{1} \leq m_{1} \ \wedge \ n_{2} \leq m_{2},$$
for every $n_{1}, n_{2}, m_{1}, m_{2} \in \Nat$, then
$$\nexists_{f \in \FNNtwo}\forall_{n_{1}, n_{2}, m_{1}, m_{2} \in \Nat}(f(n_{1}, n_{2})
\leq f(m_{1}, m_{2}) \rightarrow $$
$$(n_{1}, n_{2}) \leq (m_{1}, m_{2})),$$
since, if there was such a function $f$, then $f(0, 1) > f(1, 0) > f(0, 1)$. From this we conclude that 
$\DL(1, 2)$ doesn't prove $\DL(2, 2)$ in one step, since if there was such a function $f$ and $\alpha, \beta$ 
are given sequences, by $\DL(1, 2)$ on $(f(\alpha(n), \beta(n)))_{n}$ there are indices $i<j$ such that
$$f(\alpha(i), \beta(i)) \leq f(\alpha(j), \beta(j))$$
hence
$$(\alpha(i), \beta(i)) \leq (\alpha(j), 
\beta(j)).$$ 
A positive version of the above negation is the following, constructively stronger, formula:
$$\forall_{f \in \FNNtwo}\exists_{n_{1}, n_{2}, m_{1}, m_{2} \in \Nat}(f(n_{1}, n_{2})
\leq f(m_{1}, m_{2}) \ \wedge $$
$$(n_{1}, n_{2}) \nleq (m_{1}, m_{2})).$$
Next we prove constructively a strong form of this positive version, for arbitrary $l>1$, concluding 
that no single case $\DL(1, l)$ can prove $\DL(2, 2)$ in one step. In this way a ``meta-mathematical'' question
leads to a positive mathematical fact. First we show the following lemma.

\begin{lemma}\label{lem-one} Let $M \in \Nat$, $l > 1$ and $\alpha, \beta \in \FNN$.
$$\exists_{n_{1} < n_{2} < 
\ldots < n_{l}}(\alpha(n_{1}) = \ldots = \alpha(n_{l}) < M \ \vee $$
$$\beta(n_{1}) = \ldots = \beta(n_{l}) < M) \ \vee$$
$$\exists_{n, m \in \Nat}(M \leq \alpha(n) \leq \beta(m) \ \vee \ M \leq \beta(n) \leq \alpha(m)).$$

\end{lemma}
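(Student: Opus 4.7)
The plan is to reduce the conclusion to the finite pigeonhole principle via a clipping trick on $\alpha$ and $\beta$. Concretely, for every $n \in \Nat$ I would define
\[
P(n) := (\min(\alpha(n), M),\ \min(\beta(n), M)) \in \{0, 1, \ldots, M\}^{2},
\]
which is constructively well-defined since comparison on $\Nat$ is decidable, and whose image lies in a finite set of cardinality $(M+1)^{2}$.

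Setting $N := (M+1)^{2}(l-1) + 1$, I would then apply the finite pigeonhole principle to $P$ restricted to $\{0, 1, \ldots, N-1\}$ to extract indices $n_{1} < n_{2} < \ldots < n_{l} < N$ together with a pair $(a, b) \in \{0, \ldots, M\}^{2}$ satisfying $P(n_{j}) = (a, b)$ for every $j \in \{1, \ldots, l\}$. This step is constructive, since the counting function $v \mapsto |\{n < N : P(n) = v\}|$ takes values in $\Nat$, its maximum is computable, and a value attained at least $l$ times can be exhibited explicitly by iterating.

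The remainder is a three-way decidable case split. If $a < M$, then $\alpha(n_{j}) = a < M$ for every $j$, yielding the first inner disjunct of the first main disjunct. If $a = M$ but $b < M$, then $\beta(n_{j}) = b < M$ for every $j$, yielding the second one. Otherwise $a = b = M$, so $\alpha(n_{1}) \geq M$ and $\beta(n_{1}) \geq M$; by decidable trichotomy on $\Nat$ applied to $\alpha(n_{1})$ and $\beta(n_{1})$, either $M \leq \alpha(n_{1}) \leq \beta(n_{1})$ or $M \leq \beta(n_{1}) \leq \alpha(n_{1})$, and in either case the choice $n = m = n_{1}$ witnesses the second main disjunct.

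There is no real obstacle here: the whole argument sidesteps the refined machinery of Section~\ref{sec: strong} because clipping both sequences at $M$ collapses the infinite problem into a purely finite combinatorial one. Should one prefer to stay strictly inside the $\DL$-based framework of the paper, one could replace finite pigeonhole by an application of $\DLs(1, (M+1)(l-1)+1)$ to the clipped sequence $(\min(\alpha(n), M))_{n}$, followed by an analogous clipped argument on the resulting $\beta$-subsequence; this only rephrases the bookkeeping without changing the conclusion.
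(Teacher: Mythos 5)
Your proof is correct, and it takes a genuinely different route from the paper's. The paper scans the two sequences in blocks of length $K=(l-1)M+1$: in each block either all values of $\alpha$ (or of $\beta$) lie below $M$, in which case the finite pigeonhole principle with $M$ colours yields the constant subsequence, or a pair $\alpha(n_i),\beta(m_i)\geq M$ is harvested; this is repeated at most $\alpha(n_{1})+1$ times and the proof is closed by applying $\DLs(1,2)$ to the interleaved sequence $\alpha(n_{1}),\beta(m_{1}),\alpha(n_{2}),\beta(m_{2}),\ldots$, giving the extracted bound $K(\alpha(n_{1})+1)$, which depends on the input sequence. You instead clip both sequences at $M$, apply the finite pigeonhole principle once with $(M+1)^{2}$ colours to the pair sequence $P(n)=(\min(\alpha(n),M),\min(\beta(n),M))$, and finish with a decidable three-way case split; this exploits the fact that the second disjunct of the lemma imposes no relation between $n$ and $m$, so a single index at which both clipped values equal $M$ suffices and the $\DLs(1,2)$ step is avoided altogether. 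What your approach buys is a simpler argument and a bound $N=(M+1)^{2}(l-1)+1$ that is uniform in $\alpha,\beta$ (depending only on $M$ and $l$), whereas the paper's bound is sequence-dependent; what the paper's version buys is that it stays inside its $\DLs(1,\cdot)$-driven repetitive-argument framework and keeps the colour count linear in $M$. Both proofs are constructively unproblematic and both suffice for the use of the lemma in Theorem~\ref{thm-1l}. One small caveat on your closing aside: if you replace the pigeonhole step by $\DLs(1,(M+1)(l-1)+1)$ applied to the clipped $\alpha$-sequence, the case where the constant clipped value equals $M$ leaves you with only $l$ indices, which is not enough to run the analogous argument on $\beta$; you would need the first application to have length about $(M+1)\bigl((M+1)(l-1)\bigr)+1$ so that $(M+1)(l-1)+1$ indices with clipped value $M$ survive. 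This affects only the optional remark, not your main proof.
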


\begin{proof}
%We fix natural numbers $M, l$ and sequences $\alpha, \beta$. 
The number $K = (l-1)M + 1$ is the bound on 
the length of a sequence colored with the $M$ colors of $\{0, \ldots, M-1\}$
in order to have a monochromatic subsequence of 
length $l$ (this simple case of the finite pigeonhole principle has an immediate
inductive proof within $\BISH$).
If all the first $K$-terms of $\alpha$ are strictly smaller than $M$, or all the first $K$-terms of $\beta$ are
strictly smaller than $M$, 
then the conclusion follows immediately. Suppose that not all the first $K$-terms of $\alpha$ and not all 
the first $K$-terms of $\beta$ are strictly smaller than $M$. The use of the principle of the excluded middle here is 
unproblematic as the related property is decidable. Hence, there are $n_{1}, m_{1} < K$ such that 
$\alpha(n_{1}), \beta(m_{1}) \geq M$. We repeat the previous step on the tails
$\alpha^{(1)}, \beta^{(1)}$ of $\alpha, \beta$ starting from $\alpha(K+1), \beta(K+1)$, respectively.
Then again either the first $K$-terms of $\alpha^{(1)}$ are strictly smaller than $M$, or the first $K$-terms of
$\beta^{(1)}$ are strictly smaller than $M$. If not there are numbers $n_{2}, m_{2}$ such that $K < n_{2}, m_{2} < 2K$ and
$\alpha(n_{2}), \beta(m_{2}) \geq M$. We repeat this procedure at most $\Lambda = 
(\alpha(n_{1})+1)$-number of times. If the first disjunct has not been proved, applying
$\DLs(1, 2)$ on the sequence
$$\gamma(0) = \alpha(n_{1}), \ \gamma(1) = \beta(m_{1}), $$
$$\gamma(2) = \alpha(n_{2}), \ \gamma(3) 
= \beta(m_{2}), \ldots \ ,$$
we get an index $i < \Lambda$  such that
$M \leq \alpha(n_{i}) \leq \beta(m_{i}) \ \vee \ M \leq \beta(m_{i}) \leq \alpha(n_{i+1})$.
\end{proof}

It is clear that the proof also works if $M = 0$, and that $n_{1}, n_{2}, \ldots, n_{l}, n, m 
\leq B = K(\alpha(n_{1}) + 1)$ i.e., $B$ is an extracted bound. If $\phi_{1}, \ldots, \phi_{n}$ are formulas, 
then $\bigwedge_{i = 1}^{n}\phi_{i}$ ($\bigvee_{i = 1}^{n}\phi_{i}$) denotes the conjunction (disjunction)
of  $\phi_{1}, \ldots, \phi_{n}$.

\begin{theorem}\label{thm-1l} If $l > 1$ and $m \in \mathbb{N}$, then
$$\forall_{f \in \FNNtwo}\exists_{i_{1}, j_{1}, \ldots,
i_{l}, j_{l} \in \Nat}\Bigg(\bigwedge_{s = 1}^{l}(m \leq i_{s}) \wedge \bigwedge_{s = 1}^{l}(m \leq j_{s}) \ \wedge$$
$$\bigwedge_{r = 1}^{l-1} [f(i_{r}, j_{r}) \leq f(i_{r+1}, j_{r+1})] \ \wedge$$
$$\bigwedge_{1 \leq r < s \leq l} (i_{r}, j_{r}) \nleq (i_{s}, j_{s})\Bigg).$$  
 
\end{theorem}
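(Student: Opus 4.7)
My plan is to prove the theorem by exhibiting an explicit $l$-element antichain in the product order on $\{n \in \Nat : n \geq m\}^{2}$ and then sorting its elements by their $f$-values.

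First I would take the antichain
$A = \{(m+k,\ m+l+1-k) : 1 \leq k \leq l\} \subseteq \Nat^{2}$.
For $1 \leq k_{1} < k_{2} \leq l$ the first coordinates of the two corresponding pairs satisfy $m + k_{1} < m + k_{2}$ while the second coordinates satisfy $m + l + 1 - k_{1} > m + l + 1 - k_{2}$; hence the two pairs are incomparable in the product order on $\Nat^{2}$, and $A$ is an antichain of size $l$ in which every coordinate of every element is at least $m+1 \geq m$.

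Second, I would consider the finite list of natural numbers $v_{k} = f(m + k,\ m + l + 1 - k)$ for $k = 1, \ldots, l$. Since the order on $\Nat$ is decidable, a plain selection sort produces constructively a permutation $\pi$ of $\{1, \ldots, l\}$ for which $v_{\pi(1)} \leq v_{\pi(2)} \leq \ldots \leq v_{\pi(l)}$; no choice principle or excluded middle beyond decidable instances on finite data is invoked. Setting $(i_{r}, j_{r}) := (m + \pi(r),\ m + l + 1 - \pi(r))$ for $r = 1, \ldots, l$ then yields the desired witnesses: by construction $i_{r}, j_{r} \geq m$, the values $f(i_{r}, j_{r}) = v_{\pi(r)}$ are weakly increasing in $r$, and for any $1 \leq r < s \leq l$ the pairs $(i_{r}, j_{r})$ and $(i_{s}, j_{s})$ are two distinct elements of the antichain $A$, hence incomparable, so in particular $(i_{r}, j_{r}) \nleq (i_{s}, j_{s})$.

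The main obstacle: essentially none, and strikingly this plan does not use Lemma~\ref{lem-one} at all, so I expect the author has in mind a different, more iterative proof designed to carry an explicit bound in the spirit of Section~\ref{sec: strong}. The natural alternative route would be to build the pairs one at a time: apply Lemma~\ref{lem-one} to $\alpha(n) = f(m+n+1, m)$ and $\beta(n) = f(m, m+n+1)$ with $M = 0$ to extract a first incomparable pair $(i_{1}, j_{1}), (i_{2}, j_{2})$ with $f$-values ordered, then iterate above the level $M := f(i_{2}, j_{2})$ using suitably shifted sequences, placing each new pair ``northwest'' of the previous ones so that no earlier pair dominates a later one. The delicate point of that alternative is coordinating the iteration so that incomparability is preserved across all $\binom{l}{2}$ pairs and the bound emerging from Lemma~\ref{lem-one} is properly tracked at each step, which is exactly what the direct antichain-plus-sort argument above sidesteps.
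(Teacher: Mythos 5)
Your proof is correct, and it takes a genuinely different and far more elementary route than the paper's. You observe that the $l$ pairs $(m+k,\, m+l+1-k)$, $1 \leq k \leq l$, form an antichain for the pointwise order with all coordinates at least $m+1$, that sorting the finitely many values $f(m+k,\, m+l+1-k)$ is constructively unproblematic (decidable order on finite data), and that listing the antichain in order of weakly increasing $f$-value yields witnesses satisfying all three conjuncts, since distinct members of an antichain are in particular not $\leq$-related in the direction the theorem requires; this is airtight. The paper instead proves the cases $l=2,3$ directly and passes inductively from $l-2$ to $l$, applying $\DLs(1,2)$ to interleaved sequences such as $f(m+n+1,m), f(m,m+n+1)$ and Lemma~\ref{lem-one} with $M = f(i_{l-2}, j_{l-2})$, $\alpha(n) = f(m, m+n+1)$, $\beta(n) = f(m+n+1, m)$, so that its witnesses lie on the two shifted axes and the argument stays within the repetitive-argument, bound-extracting style of Section~\ref{sec: strong}; your guess about that intended iterative route is essentially accurate. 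What your approach buys is brevity, an immediate explicit bound (every index is at most $m+l$, much smaller than the bound $B$ extracted after Lemma~\ref{lem-one}), and the insight that the statement as formalized needs no Dickson-style tools at all: the only role of $f$ is to linearly preorder a fixed antichain, which if anything makes the ``one-step unprovability'' reading more transparent. Nothing downstream is lost, since the proof of Theorem~\ref{thm-2232} uses Theorem~\ref{thm-1l} only through its statement, so your witnesses serve there equally well; what the paper's longer proof buys is mainly its methodological continuity with the $\DLs(k,l)$ machinery rather than extra mathematical content.
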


\begin{proof}
First we show this for the cases $l = 2, 3$ and then we prove that the case $l - 2$ implies the 
case $l$, for every $l > 3$. 

If $l = 2$, then fixing $m$ and applying $\DLs(1, 2)$ on the sequence
$$\alpha(0) = f(m+1, m), \ \alpha(1) = f(m, m+1),$$
$$\alpha(2) = f(m+2, m), \ \alpha(3) = f(m, m+2), \ldots \ ,$$
i.e., 
$$\alpha(2n) = f(m + (n+1), m),$$
$$\alpha(2n+1) = f(m, m + (n+1)),$$
we get $i < \alpha(0) + 1$ such that $\alpha(i) \leq \alpha(i + 1)$. 
If $i = 2k$, for some $k \in \Nat$, then
$$f(m + k + 1, m) \leq f(m, m + k + 1),$$
and if $i = 2k + 1$, for some $k \in \Nat$, then
$$f(m, m + k + 1) \leq f(m + k + 2, m)$$
while  
$$(m + k + 1, m) \nleq (m, m + k + 1),$$
$$(m, m + k + 1) \nleq (m + k + 2, m).$$
If $l = 3$, we apply Lemma~\ref{lem-one} on
$$M = f(m+1, m+1),$$
$$l = 3, $$
$$\alpha(n) = f(m, m+n+1),$$
$$\beta(n) = f(m+n+1, m).$$
If there are $n_{1} < n_{2} < n_{3}$ such that $f(m, m+n_{3}+1) = f(m, m+n_{2}+1) = f(m, m+n_{1}+1) < M$, then 
$$(m, m+n_{3}+1) \nleq (m, m+n_{2}+1) \ \wedge$$
$$(m, m+n_{3}+1) \nleq (m, m+n_{1}+1) \ \wedge$$
$$(m, m+n_{2}+1) \nleq (m, m+n_{1}+1).$$
If there are $n_{1} < n_{2} < n_{3}$ such that
$\beta(n_{3}) = \beta(n_{2}) = \beta(n_{1}) < M$, we work similarly. 
Next we suppose that there exist indices $i, j$ such that 
$$f(m+1, m+1) \leq f(m, m+i+1) \leq f(m+j+1, m).$$
Again we conclude that
$$(m+1, m+1) \nleq (m, m+i+1) \ \wedge$$
$$(m+1, m+1) \nleq (m+j+1, m) \ \wedge$$
$$(m, m+i+1) \nleq (m+j+1, m).$$
If there exist indices $i, j$ such that $f(m+1, m+1) \leq f(m+i+1, m) \leq f(m, m+j+1)$, we work similarly. 

For the inductive step we fix $f$ and we suppose that there exist 
$i_{1}, j_{1}, i_{2}, j_{2}, \ldots, i_{l-2}, j_{l-2}$ such that
$$m+1 \leq i_{1}, j_{1}, i_{2}, j_{2}, \ldots, i_{l-2}, j_{l-2} \ \  \wedge$$
$$\bigwedge_{r = 1}^{l-3} [f(i_{r}, j_{r}) \leq f(i_{r+1}, j_{r+1})] \ \ \wedge$$
$$\bigwedge_{1 \leq r < s \leq l-2} (i_{r}, j_{r}) \nleq (i_{s}, j_{s})).$$ 
Applying Lemma~\ref{lem-one} on
$$M = f(i_{l-2}, j_{l-2}),$$
$$l,$$
$$\alpha(n) = f(m, m+n+1), $$
$$\beta(n) = f(m+n+1, m),$$
and working as in  case $l = 3$, we reach the required conclusion for $f$. Note that if $1 \leq r \leq l - 2$, 
then $(i_{r}, j_{r}) \nleq (m, m + i + 1)$ and $(i_{r}, j_{r}) \nleq (m + j + 1, m)$, since by our hypothesis
$m + 1 \leq i_{r}, j_{r}$.
\end{proof}

Next corollary is an immediate consequence of Theorem~\ref{thm-1l} 
(the condition $m \leq i_{1}, j_{1}, i_{2}, j_{2}, \ldots, i_{l}, j_{l}$ in Theorem~\ref{thm-1l}, 
which shows that many such $l$-tuples of natural numbers can be found, is not necessary to its proof). 

\begin{corollary}\label{cor-1l} If $l > 2$, then
$$\nexists_{f \in \FNNtwo}\forall_{i_{1}, j_{1}, \ldots, 
i_{l}, j_{l} \in \Nat}\Bigg(\bigwedge_{r = 1}^{l-1} [f(i_{r}, j_{r}) \leq f(i_{r+1}, j_{r+1})] \rightarrow $$
$$\bigvee_{1 \leq r < s \leq l} (i_{r}, j_{r}) \leq (i_{s}, j_{s})\Bigg).$$
 
\end{corollary}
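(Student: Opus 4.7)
The plan is to derive Corollary~\ref{cor-1l} as an immediate contrapositive of Theorem~\ref{thm-1l}, so essentially no new work is needed beyond unpacking the statements. Assume, aiming at a contradiction, that for some $l > 2$ there exists $f \in \FNNtwo$ such that for every $i_{1}, j_{1}, \ldots, i_{l}, j_{l} \in \Nat$,
\[
\bigwedge_{r = 1}^{l-1} \bigl[f(i_{r}, j_{r}) \leq f(i_{r+1}, j_{r+1})\bigr] \rightarrow
\bigvee_{1 \leq r < s \leq l} (i_{r}, j_{r}) \leq (i_{s}, j_{s}).
\]
Apply Theorem~\ref{thm-1l} to this $f$, taking for instance $m = 0$ (the value of $m$ is irrelevant here, since the condition $m \leq i_{s}, j_{s}$ is used in the theorem only to show that \emph{many} such witnessing tuples exist, as noted in the remark preceding the corollary).

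The theorem then yields concrete indices $i_{1}, j_{1}, \ldots, i_{l}, j_{l}$ that simultaneously satisfy the chain condition
\[
\bigwedge_{r = 1}^{l-1} \bigl[f(i_{r}, j_{r}) \leq f(i_{r+1}, j_{r+1})\bigr]
\]
and the incomparability conjunction
\[
\bigwedge_{1 \leq r < s \leq l} (i_{r}, j_{r}) \nleq (i_{s}, j_{s}).
\]
Feeding these indices into the assumed property of $f$ produces, from the chain conjunction, some pair $1 \leq r_{0} < s_{0} \leq l$ with $(i_{r_{0}}, j_{r_{0}}) \leq (i_{s_{0}}, j_{s_{0}})$, directly contradicting the corresponding $\nleq$ conjunct.

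There is no genuine obstacle: the entire content lies in Theorem~\ref{thm-1l}, and the corollary is purely the observation that a statement of the form $\forall f\, \exists \bar{x}\, (A(f,\bar{x}) \wedge \neg B(f,\bar{x}))$ refutes $\exists f\, \forall \bar{x}\, (A(f,\bar{x}) \rightarrow B(f,\bar{x}))$. The only point worth checking carefully is that the theorem's incomparability conjunction covers exactly the pairs $1 \leq r < s \leq l$ appearing in the disjunction of the corollary, which it does by construction.
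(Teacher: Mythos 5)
Your proposal is correct and matches the paper's treatment: the paper also presents Corollary~\ref{cor-1l} as an immediate consequence of Theorem~\ref{thm-1l}, noting exactly as you do that the condition $m \leq i_{s}, j_{s}$ is inessential, so the corollary is just the observation that $\forall f\,\exists \bar{x}\,(A \wedge \neg B)$ refutes $\exists f\,\forall \bar{x}\,(A \rightarrow B)$. Nothing further is needed, and deriving the negation by assuming such an $f$ and obtaining a contradiction is constructively unproblematic.
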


Corollary \ref{cor-1l} can be interpreted as the mathematical formulation of the expression ``$\DL(1, l)$ doesn't prove
$\DL(2, 2)$ in one step''. If there was such a function $f$, and $\alpha, \beta$ are
given sequences, applying $\DL(1, l)$ on the sequence$(f(\alpha(n), \beta(n)))_{n}$ we would get
indices $i_{1} < \ldots < i_{l}$ such that
$$f(\alpha(i_{1}), \beta(i_{1})) \leq f(\alpha(i_{2}), \beta(i_{2})) \leq \ldots \leq 
f(\alpha(i_{l}), \beta(i_{l})).$$
Then we would have
$$\bigvee_{1 \leq r < s \leq l} (\alpha(i_{r}), \beta(i_{r})) \leq (\alpha(i_{s}), \beta(i_{s})),$$
which by the constructive interpretation of disjunction implies $\DL(2, 2)$. The inequality
$(n_{1}, n_{2}, n_{3}) \leq (m_{1}, m_{2}, m_{3})$ on $\Nat^{3}$ is defined, as in the case of 
$\Nat^{2}$, pointwisely.

\begin{theorem}\label{thm-2232}
$$\forall_{f_{1}, f_{2} \in \FNNthree}\exists_{n_{1}, n_{2}, n_{3}, m_{1}, 
m_{2}, m_{3} \in \Nat}\Bigg($$
$$f_{1}(n_{1}, n_{2}, n_{3}) \leq f_{1}(m_{1}, m_{2}, m_{3}) \ \wedge$$
$$f_{2}(n_{1}, n_{2}, n_{3}) \leq f_{2}(m_{1}, m_{2}, m_{3}) \ \wedge$$
$$(n_{1}, n_{2}, n_{3}) \nleq (m_{1}, m_{2}, m_{3})\Bigg).$$
\end{theorem}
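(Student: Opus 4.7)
A first attempt, which ultimately fails, would be to reduce to two dimensions: apply Theorem~\ref{thm-1l} to $g_1(a, b) = f_1(a, b, 0)$ at some sufficiently large $l = L$ to obtain $L$ pairwise $\leq$-incomparable pairs in $\Nat^2$ with $g_1$ weakly increasing, lift them to triples $(i_r, j_r, 0)$, and then apply $\DLs(1, 2)$ (Proposition~\ref{prp: 12}) to the $f_2$-values at these pairs. This reduction collapses for functions such as $f_1(a, b, c) = a$, $f_2(a, b, c) = b$: within the plane $c = 0$ the $f_2$-values are forced to strictly decrease along any $g_1$-increasing chain of pairwise incomparable pairs, so Proposition~\ref{prp: 12} cannot succeed. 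Thus any constructive proof must genuinely use all three coordinates.

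My plan is therefore to adapt the strategy of Theorem~\ref{thm-1l}, in particular its application of Lemma~\ref{lem-one}, to two functions and three-coordinate triples. I introduce three axis-aligned families $A_n = (n+1, 0, 0)$, $B_n = (0, n+1, 0)$, $C_n = (0, 0, n+1)$ for $n \geq 0$. Triples from different families are pairwise $\leq$-incomparable in either direction, so it suffices to find a cross-family pair $(X_n, Y_m)$ with $X \neq Y$ satisfying $f_i(X_n) \leq f_i(Y_m)$ for $i = 1, 2$; alternatively, within-family reverse-order pairs $(X_{n_2}, X_{n_1})$ with $n_1 < n_2$ and equal $(f_1, f_2)$-values also yield the desired conclusion, since $X_{n_2} \nleq X_{n_1}$ in the coordinate-wise order.

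The central technical step is a two-function, three-family generalization of Lemma~\ref{lem-one}. For bounds $M_i = f_i(1, 1, 1)$ and sequences $\alpha_i^X(n) = f_i(X_n)$ for $X \in \{A, B, C\}$, the generalization would yield either (a) many indices $n$ in some family $X$ for which $(\alpha_1^X(n), \alpha_2^X(n))$ lies in the finite box $\{0, \ldots, M_1 - 1\} \times \{0, \ldots, M_2 - 1\}$, so that the finite pigeonhole principle produces $n_1 < n_2$ with identical $(f_1, f_2)$-pair (yielding the within-family reverse-order solution $P = X_{n_2}, Q = X_{n_1}$), or (b) for some distinct pair of families $X, Y$, indices $n, m$ with $\alpha_i^X(n) \leq \alpha_i^Y(m)$ for $i = 1, 2$ (yielding the cross-family pair $P = X_n, Q = Y_m$). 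The proof iterates $\DLs(1, 2)$ along the lines of Lemma~\ref{lem-one}'s original proof, now tracking both $f_1$ and $f_2$ values simultaneously and splitting indices by whether each component of $(\alpha_1^X(n), \alpha_2^X(n))$ is below or above the corresponding bound.

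The main obstacle is establishing this three-family, two-function generalization constructively while extracting an explicit bound. The combinatorial case analysis is more intricate than in the one-function case of Lemma~\ref{lem-one}, because any single pair of families may fail to cover all configurations of $(f_1, f_2)$-values (as simple examples quickly show): if $f_1, f_2$ exhibit strictly monotone, ``orthogonal'' behaviour on $A$ and $C$ alone, neither within-family repetition nor a cross $A\text{-}C$ chain will exist, and the third family $B$ must be invoked. The iteration over all three pairs $\{A, B\}, \{A, C\}, \{B, C\}$ combined with the pigeonhole bounds from each step delivers the desired pair of triples, with an overall bound on the coordinates $n_1, n_2, n_3, m_1, m_2, m_3$ expressible in terms of $M_1, M_2$ and the values of $f_1, f_2$ on initial segments of the three families.
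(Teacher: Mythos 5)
Your diagnosis of the naive two-dimensional reduction is correct, and the within-family/cross-family split is a reasonable instinct, but the central lemma your argument rests on --- the three-family, two-function generalization of Lemma~\ref{lem-one} with thresholds $M_{i}=f_{i}(1,1,1)$ --- is not merely left unproved, it is false as stated. Take $f_{1},f_{2}$ with $f_{1}(1,1,1)=f_{2}(1,1,1)=0$ and with constant value pairs $(f_{1},f_{2})=(0,2)$ on the family $A$, $(1,1)$ on $B$, and $(2,0)$ on $C$. Then the box $\{0,\ldots,M_{1}-1\}\times\{0,\ldots,M_{2}-1\}$ is empty, so alternative (a) cannot occur; and the three value pairs are pairwise incomparable in $\Nat^{2}$, so for every ordered pair of distinct families there are no indices $n,m$ with $\alpha_{i}^{X}(n)\leq\alpha_{i}^{Y}(m)$ for both $i$, so alternative (b) fails as well. (The conclusion of the theorem still holds for such $f_{1},f_{2}$, e.g.\ via $(1,1,1)$ against $A_{1}=(2,0,0)$, or via the repetition inside $A$; it is just not delivered by your dichotomy.) The obstruction is structural: inside a one-parameter axis family the triples are totally ordered, so Theorem~\ref{thm-1l} gives you nothing there and you are reduced to hoping for value repetitions below a bound you do not control; across two families you are demanding a simultaneous comparison of both functions, which is itself a $\DL(2,2)$-type conclusion and, as the example shows, can fail for all three pairs of families at once. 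Changing the thresholds does not obviously repair this, because the ``value above threshold'' branch is only usable if the threshold is attained at a point that is incomparable to the family points and at which the \emph{other} function imposes no constraint.

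For comparison, the paper's proof gets around exactly these two difficulties. First, it works with the full two-parameter family $\{(0,i,j): i,j\in\Nat\}$, to which Theorem~\ref{thm-1l} applies and yields $L$ pairwise incomparable triples on which $f_{1}$ weakly increases; the threshold for $f_{2}$ is $l_{2}=f_{2}(1,0,0)$, the value at the single triple $(1,0,0)$, which is incomparable to every $(0,i,j)$, so either some $f_{2}(0,i_{t},j_{t})\geq l_{2}$ (and the pair with $(1,0,0)$ works because in this case $f_{1}(1,0,0)=0$, making the $f_{1}$-inequality free), or all $L=l_{2}+1$ values lie below $l_{2}$ and two of them coincide. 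Second, the general case, where neither $f_{1}(1,0,0)$ nor $f_{2}(1,0,0)$ vanishes, is handled not by a more elaborate dichotomy but by a descent: with $l_{1}$ the smaller of the two values one passes to $f_{i}\dotminus l_{1}$, applies the basic step, and in the cases where the truncation blocks transferring the inequalities back one finds a non-zero triple at which $f_{1}$ or $f_{2}$ takes a strictly smaller value, so after at most $l_{1}$ rounds one reaches the basic case. These two ingredients --- a two-parameter family supporting Theorem~\ref{thm-1l} together with one incomparable threshold point where one function vanishes, and the $\dotminus$-descent that manufactures such a point --- are what your outline is missing, and iterating over the three axis-family pairs cannot substitute for them.
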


\begin{proof} We suppose first that $f_{1}(1, 0, 0) = f_{2}(1, 0, 0) = 0$. Then 
$f_{1}(1, 0, 0) \leq f_{1}(0, m_{2}, m_{3})$, $f_{2}(1, 0, 0) \leq f_{2}(0, m_{2}, m_{3})$ and
$(1, 0, 0) \nleq (0, m_{2}, m_{3})$, for every $m_{2}, m_{3} \in \Nat$. 

Next we suppose that $f_{1}(1, 0, 0) = 0$ and $f_{2}(1, 0, 0) = l_{2} > 0$. Clearly, if there are
$m_{2}, m_{3} \in \Nat$ such that $f_{2}(0, m_{2},m_{3}) \geq l_{2}$, then $(1, 0, 0)$ and $(0, m_{2}, m_{3})$ are 
the required triplets. Taking $L = l_{2} + 1$ and $m > 0$ and applying Theorem~\ref{thm-1l} on $L, m$ and
the function
$f(i, j) = f_{1}(0, i, j)$ we find indices $i_{1}, j_{1}, \ldots, i_{L}, j_{L} \geq m$ such that
$$\bigwedge_{r = 1}^{l_{2}} [f_{1}(0, i_{r}, j_{r}) \leq f_{1}(0, i_{r+1}, j_{r+1})] \ \wedge$$
$$\bigwedge_{1 \leq r < s \leq L} (i_{r}, j_{r}) \nleq (i_{s}, j_{s})).$$ 
Next we consider the sequence $f_{2}(0, i_{1}, j_{1}), \ldots, f_{2}(0, i_{L}, j_{L})$. Either there
is a term $f_{2}(0, i_{t}, j_{t})$, where $1 \leq t \leq L$, such that $f_{2}(0, i_{t}, j_{t}) \geq l_{2}$,
which gives directly what we want to show, or all these $L$ terms are numbers strictly smaller
than $l_{2}$. But then there 
are two of them which are equal i.e., there exist $r < s$ such that
$$f_{2}(0, i_{r}, j_{r}) = f_{2}(0, i_{s}, j_{s}).$$
Clearly $(0, i_{r}, j_{r})$ and $(0, i_{s}, j_{s})$ are the required triplets. Note that both of them
are non-zero triplets, since the indices determined by Theorem~\ref{thm-1l} were larger than
$m$, and $m > 0$. 

We call the previous two cases the \textit{basic proof-step}, and the
arguments used for them work for any fixed non-zero triplet
$(k_{1}, k_{2}, k_{3})$ for which $f_{1}(k_{1}, k_{2}, k_{3}) = f_{2}(k_{1}, k_{2}, k_{3}) = 0$,
or $f_{1}(k_{1}, k_{2}, k_{3}) = 0$ and $f_{2}(k_{1}, k_{2}, k_{3}) = l_{2} > 0$. If, for example, 
$k_{2} > 0$, we consider the function $f(n, m) = f_{1}(n, 0, m)$.

Finally, we treat\footnote{Classically this case has a simpler proof. 
Given functions $f_{1}, f_{2}$ either one of them is $0$ on some non-zero triplet, or not. 
In the latter case let $\Lambda_{i} = \min\{f_{i}(n_{1}, n_{2}, n_{3} \mid (n_{1}, n_{2}, n_{3}) 
\neq (0, 0, 0)\}$ and $\Lambda = \min\{\Lambda_{1}, \Lambda_{2}\}$. If we consider the functions 
$g_{i}(n_{1}, n_{2}, n_{3}) = f_{i}(n_{1}, n_{2}, n_{3})-\Lambda$, there is a triplet on which
one of them takes the value $0$.} the case $f_{1}(1, 0, 0) = l_{1} > 0$ and $f_{2}(1, 0, 0) = l_{2} > 0$.
Without loss of generality we assume that $l_{1} \leq l_{2}$. We consider the functions
$$g_{i}(k_{1}, k_{2}, k_{3}) = f_{i}(k_{1}, k_{2}, k_{3}) \dotminus l_{1},$$
where $\dotminus$ is the modified subtraction and $i \in \{1, 2\}$. Clearly, $g_{1}(1, 0, 0) = 0$ 
and $g_{2}(1, 0, 0) = l_{2} - l_{1} \geq 0$, hence by the previous basic proof-step there exist
$$(n_{1}, n_{2}, n_{3}), (m_{1}, m_{2}, m_{3}) \neq (0, 0, 0)$$
such that
$$\bigwedge_{i = 1}^{2} (f_{i}(n_{1}, n_{2}, n_{3}) \dotminus l_{1}) \leq (f_{i}(m_{1}, m_{2}, m_{3})
\dotminus l_{1}) \ \wedge$$
$$(n_{1}, n_{2}, n_{3}) \nleq (m_{1}, m_{2}, m_{3}).$$ 
First let $f_{i}(n_{1}, n_{2}, n_{3}) \geq l_{1}$, for every $i \in \{1, 2\}$, and we consider
the following cases:
  
If $f_{i}(n_{1}, n_{2}, n_{3}) > l_{1}$, for every $i \in \{1, 2\}$, then 
$f_{i}(m_{1}, m_{2}, m_{3}) > l_{1}$, and hence 
$$\bigwedge_{i = 1}^{2} (f_{i}(n_{1}, n_{2}, n_{3}) \leq f_{i}(m_{1}, m_{2}, m_{3})) \ \wedge$$
$$(n_{1}, n_{2}, n_{3}) \nleq (m_{1}, m_{2}, m_{3}).$$ 
If $f_{1}(n_{1}, n_{2}, n_{3}) = l_{1}$ and $f_{1}(m_{1}, m_{2}, m_{3}) < l_{1}$, we repeat 
the previous basic proof-step starting from the two values $f_{2}(m_{1},
m_{2}, m_{3})$ and $f_{1}(m_{1}, m_{2}, m_{3}) < l_{1}$.
If $f_{1}(n_{1}, n_{2}, n_{3}) = l_{1}$ and $f_{1}(m_{1}, m_{2}, m_{3}) \geq l_{1}$, then 
if $f_{2}(n_{1}, n_{2}, n_{3}) > l_{1}$, then $(n_{1}, n_{2}, n_{3}), (m_{1}, m_{2}, m_{3})$ is 
the required pair of triplets, while if $f_{2}(n_{1}, n_{2}, n_{3}) = l_{1}$, we consider two cases:
If $f_{2}(m_{1}, m_{2}, m_{3}) < l_{1}$, then we repeat the basic proof-step starting from the inequality
$f_{1}(m_{1}, m_{2}, m_{3})$ and $f_{2}(m_{1}, m_{2}, m_{3}) < l_{1}$. If $f_{2}(m_{1}, m_{2}, m_{3}) 
\geq l_{1}$, then $(n_{1}, n_{2}, n_{3}), (m_{1}, m_{2}, m_{3})$ is the required pair of triplets.
If $f_{1}(n_{1}, n_{2}, n_{3}) < l_{1}$ or $f_{2}(n_{1}, n_{2}, n_{3}) < l_{1}$, we repeat 
the basic proof-step starting from $f_{1}(n_{1}, n_{2}, n_{3})$ and $f_{2}(n_{1}, n_{2}, n_{3})$.
In each case either we find the required pair of triplets, or we find a starting triplet 
on which $f_{1}$ or $f_{2}$ has less value than at the starting triplet of the previous step.
If we repeat the above steps 
at most $l_{1}$ number of times\footnote{It is easy to extract a bound from this 
proof considering the bound of Theorem~\ref{thm-1l}. Note also that the whole argument can
be rephrased as an inductive one over the minimum of the values of $f_{1}, f_{2}$ on a non-zero triplet.},
we reach a basic proof-step, where $f_{1}$ or $f_{2}$ has on some non-zero triplet the value $0$.   
\end{proof}

\begin{corollary}\label{cor-2232}
 $$\nexists_{f_{1}, f_{2} \in \FNNthree}\forall_{n_{1}, n_{2}, n_{3}, 
 m_{1}, m_{2}, m_{3} \in \Nat}\Bigg($$
 $$f_{1}(n_{1}, n_{2}, n_{3}) \leq f_{1}(m_{1}, m_{2}, m_{3}) \ \wedge$$
 $$f_{2}(n_{1}, n_{2}, n_{3}) \leq f_{2}(m_{1}, m_{2}, m_{3}) \rightarrow$$
$$(n_{1}, n_{2}, n_{3}) \leq (m_{1}, m_{2}, m_{3})\Bigg).$$
\end{corollary}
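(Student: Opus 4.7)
The plan is to derive Corollary~\ref{cor-2232} as an immediate consequence of Theorem~\ref{thm-2232}, in complete analogy with how Corollary~\ref{cor-1l} was extracted from Theorem~\ref{thm-1l}. The corollary is the straightforward negation of an existential claim whose positive (constructively stronger) form is precisely the content of the theorem just proved.

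More concretely, I would argue by contradiction. Suppose, toward a contradiction, that there do exist $f_{1}, f_{2} \in \FNNthree$ with the property that for all $(n_{1}, n_{2}, n_{3}), (m_{1}, m_{2}, m_{3}) \in \Nat^{3}$, whenever $f_{1}(n_{1}, n_{2}, n_{3}) \leq f_{1}(m_{1}, m_{2}, m_{3})$ and $f_{2}(n_{1}, n_{2}, n_{3}) \leq f_{2}(m_{1}, m_{2}, m_{3})$, we have $(n_{1}, n_{2}, n_{3}) \leq (m_{1}, m_{2}, m_{3})$. Apply Theorem~\ref{thm-2232} to this specific pair $f_{1}, f_{2}$. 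It yields triples $(n_{1}, n_{2}, n_{3})$ and $(m_{1}, m_{2}, m_{3})$ satisfying simultaneously $f_{1}(n_{1}, n_{2}, n_{3}) \leq f_{1}(m_{1}, m_{2}, m_{3})$, $f_{2}(n_{1}, n_{2}, n_{3}) \leq f_{2}(m_{1}, m_{2}, m_{3})$, and $(n_{1}, n_{2}, n_{3}) \nleq (m_{1}, m_{2}, m_{3})$. The first two conjuncts, via the assumed property of $f_{1}, f_{2}$, force $(n_{1}, n_{2}, n_{3}) \leq (m_{1}, m_{2}, m_{3})$, contradicting the third conjunct.

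Since no genuine combinatorial work is needed beyond invoking Theorem~\ref{thm-2232}, there is no real obstacle here; the only care required is to phrase the argument so that it is constructively valid. In \BISH, refuting the existence of such $f_{1}, f_{2}$ is permissible precisely because it amounts to deriving $\bot$ from a hypothetical witness, which is an intuitionistically legitimate step. The constructive strength of Corollary~\ref{cor-2232} is therefore inherited entirely from the positive, witness-providing content of Theorem~\ref{thm-2232}, and the same interpretation given after Corollary~\ref{cor-1l} applies here: the corollary expresses that $\DL(2, 2)$ cannot be derived from $\DL(3, 2)$ via a single coding function into $\Nat$, since such a coding would, through an application of $\DL(3, 2)$ to $\bigl(f_{1}(\alpha(n), \beta(n), \gamma(n)), f_{2}(\alpha(n), \beta(n), \gamma(n))\bigr)_{n}$, furnish precisely the forbidden implication.
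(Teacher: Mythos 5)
Your argument is correct and is precisely the paper's route: Corollary~\ref{cor-2232} is presented there as an immediate consequence of Theorem~\ref{thm-2232}, read exactly as your contradiction argument (a hypothetical pair $f_{1}, f_{2}$ fed into the theorem yields triples violating the assumed implication). One small slip in your closing interpretive remark only: the direction is reversed --- the corollary formalizes that $\DL(2,2)$ does not prove $\DL(3,2)$ in one step, so one would apply $\DL(2,2)$ to the coded pair $\bigl(f_{1}(\beta(n))\bigr)_{n}, \bigl(f_{2}(\beta(n))\bigr)_{n}$, where $\beta(n) = (\alpha_{1}(n), \alpha_{2}(n), \alpha_{3}(n))$, to deduce the conclusion of $\DL(3,2)$, rather than applying $\DL(3,2)$.
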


The above immediate consequence of Theorem~\ref{thm-2232} can be interpreted as a 
mathematical formulation of 
the expression ``$\DL(2, 2)$ doesn't prove $\DL(3, 2)$ in one step''. If
there were such functions $f_{1}, f_{2}$ and $\alpha_{1}, \alpha_{2}, \alpha_{3} \in \FNNthree$
are given, then applying $\DL(2, 2)$ on 
$$(f_{1}(\beta(n)))_{n}, \ \ (f_{2}(\beta(n)))_{n},$$
where, for each $n \in \mathbb{N}$, 
$$\beta(n) = (\alpha_{1}(n), \alpha_{2}(n), \alpha_{3}(n)),$$ 
we would get indices $i < j$ such that
$$f_{1}(\alpha_{1}(i), \alpha_{2}(i), \alpha_{3}(i)) \leq f_{1}(\alpha_{1}(j), 
\alpha_{2}(j), \alpha_{3}(j)) \ \wedge $$
$$f_{2}(\alpha_{1}(i), \alpha_{2}(i), 
\alpha_{3}(i)) \leq f_{2}(\alpha_{1}(j), \alpha_{2}(j), \alpha_{3}(j))$$
which would imply
$$(\alpha_{1}(i), \alpha_{2}(i), \alpha_{3}(i)) \leq (\alpha_{1}(j), \alpha_{2}(j), \alpha_{3}(j)).$$

\section{On the infinite cases of Dickson's lemma}
\label{sec: inf}

In this section we study the infinite cases of Dickson's lemma from the point of view of
constructive reverse mathematics (for more information on this subject see~\cite{Is06}).
First we show the equivalence between the various infinite cases of Dickson's lemma.

%First we show that within BIUH the infinite version of Dickson's lemma $\DL(k, \infty)$ is equivalent
%to the unbounded case $\DL(k, U)$one.

\begin{proposition}\label{prp: infinity} If $k > 1$, the following are equivalent.\\
(i) $\DL(1, \infty)$.\\
(ii) $\DL(k, \infty)$.\\
(iii) $\DL(1, U)$.\\
(iv) $\DL(k, U)$.
\end{proposition}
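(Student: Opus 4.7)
The plan is to establish the chain (ii) $\Leftrightarrow$ (iv) $\Leftrightarrow$ (iii) together with (ii) $\Leftrightarrow$ (i), which closes the square of equivalences. Two ingredients are used throughout. First, every unbounded $U \subseteq \Nat$ can be enumerated as a strictly increasing sequence $u_{0} < u_{1} < u_{2} < \ldots$ in $U$ by iterating the unboundedness condition via dependent choice, which is part of BISH. Second, the reduction from $k$ sequences to $1$ sequence proceeds by induction on $k$ using a ``pull-back along a subsequence'' argument.

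The implications (ii) $\Rightarrow$ (i), (iv) $\Rightarrow$ (iii), and (iv) $\Rightarrow$ (ii) are the easy ones: the first two are the specializations to $k = 1$, while (iv) $\Rightarrow$ (ii) follows by taking $U := \Nat$, which is trivially unbounded, noting that $\Fi(\Nat, \Nat) = \FNN$, and observing that any unbounded $I_{U} \subseteq \Nat$ yields (by the enumeration above) an infinite strictly increasing sequence $I_{\infty}$ on which the $\alpha_{j}$ remain good.

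For (ii) $\Rightarrow$ (iv), given an unbounded $U$ and $\alpha_{1}, \ldots, \alpha_{k} \in \Fi(U, \Nat)$, I would enumerate $U$ as $u_{0} < u_{1} < u_{2} < \ldots$ and define $\beta_{j}(n) := \alpha_{j}(u_{n})$ for $j \in \{1, \ldots, k\}$. Applying $\DL(k, \infty)$ to $\beta_{1}, \ldots, \beta_{k}$ produces an infinite $J \subseteq \Nat$ good for them, and the pull-back $I_{U} := \{u_{n} : n \in J\}$ is then an unbounded subset of $U$ which is good for $\alpha_{1}, \ldots, \alpha_{k}$, since $s < t$ in $J$ means $u_{s} < u_{t}$ in $I_{U}$ and $\beta_{j}(s) \leq \beta_{j}(t)$ is exactly $\alpha_{j}(u_{s}) \leq \alpha_{j}(u_{t})$. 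For (i) $\Rightarrow$ (ii), I would induct on $k$: the base case is (i) itself. At the step, applying the inductive hypothesis $\DL(k, \infty)$ to $\alpha_{1}, \ldots, \alpha_{k}$ yields an infinite $I = \{i_{0} < i_{1} < \ldots\}$ good for them, and then setting $\gamma(n) := \alpha_{k+1}(i_{n})$ and applying (i) to $\gamma$ produces an infinite $J$ such that $\{i_{n} : n \in J\}$ is infinite, good for $\alpha_{k+1}$ by construction, and good for $\alpha_{1}, \ldots, \alpha_{k}$ by the heredity of goodness on subsets of a good set. The analogous induction establishes (iii) $\Rightarrow$ (iv), though alternatively this follows from the chain (iii) $\Rightarrow$ (i) $\Rightarrow$ (ii) $\Rightarrow$ (iv).

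The main point to keep an eye on is not mathematical but foundational: both the strictly increasing enumeration of $U$ and the passage between ``infinite subset of $\Nat$'' and ``unbounded subset of $\Nat$'' rely on dependent choice. Since DC is included in BISH, this causes no difficulty, and all four statements become interderivable at essentially no constructive cost.
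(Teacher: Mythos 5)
Your proof is correct and is essentially the paper's own argument: the two key devices---the DC-enumeration of an unbounded set as a strictly increasing sequence, and the induction on $k$ that applies $\DL(1,\cdot)$ along a subsequence already good for the first $k$ sequences---are exactly those used there, merely arranged around a slightly different selection of implications (the paper proves (i)$\leftrightarrow$(ii), (i)$\rightarrow$(iii), (iii)$\leftrightarrow$(iv) and (iii)$\rightarrow$(i)). Two cosmetic points only: since $k>1$ is fixed, (ii)$\Rightarrow$(i) and (iv)$\Rightarrow$(iii) are not literal specializations but follow by applying the $k$-sequence case to $k$ copies of a single sequence, and your ``alternative'' chain for (iii)$\Rightarrow$(iv) presupposes (iii)$\Rightarrow$(i), which you did not list---though your primary inductive argument for (iii)$\Rightarrow$(iv) is sound, so nothing is actually missing.
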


\begin{proof} (i) $\rightarrow$ (ii)
DL$(1, \infty)$ is the first step in the inductive proof of  DL$(k, \infty)$. It is also used 
in the proof of the inductive step ${DL(k, \infty)} \leftrightarrow {DL(k+1, \infty)}$. If 
$\alpha_{1}, \alpha_{2}, \ldots, \alpha_{k+1} \in \FNN$, by 
$\DL(k, \infty)$, there is a sequence $i_{1} < i_{2} < i_{3} < \ldots$, such that ${\alpha_{m}(i_{1})} 
\leq  {\alpha_{m}(i_{2})} \leq {\alpha_{m}(i_{3})} \leq \ldots$, for evey ${m} \in {\{1, 2, \ldots, k\}}$. 
If we apply $\DL(1, \infty)$ on the sequence $\alpha_{m+1}(i_{1}), \alpha_{m+1}(i_{2}), 
\alpha_{m+1}(i_{3}), \ldots$,
we get a weakly increasing subsequence of it. By hypothesis, the sequences $\alpha_{1}, \alpha_{2}, \ldots,
\alpha_{k}$ weakly increase on its indices too.\\
The implication (ii) $\rightarrow$ (i) is trivial.\\
Next we show that (i) $\rightarrow$ (iii). With the use of the principle of dependent choices $\DC$ a sequence 
$s_{0} < s_{1} < ... < s_{n} < s_{n+1} < \ldots$, of elements of $U$ is constructed.
By $\DL(1, \infty)$ on the sequence $\alpha^{*}$, where
${\alpha^{*}(n)} = {\alpha(s_{n})}$, for every $n \in \Nat$, a subsequence $(k(n))_{n \in \Nat}$ is formed 
on which $\alpha$ is 
good. But then $\alpha$ is also good on ${M} = {\{s_{k(n)} : {n} \in {\mathbb{N}}\}}$, and 
${M}$ is an unbounded subset of ${\mathbb{N}}$.\\
The equivalence (iii) $\leftrightarrow$ (iv) is shown as the equivalence (i) $\leftrightarrow$ (ii).\\
Finally we show that (iii) $\rightarrow$ (i). If we take ${U} = {\mathbb{N}}$, then by $\DL(1, U)$
there exists ${M}$ unbounded subset of ${\mathbb{N}}$ such that ${i < j} \rightarrow
{{\alpha(i)} \leq {\alpha(j)}}$, for every ${i, j} \in M$. With the use of $\DC$
%Since ${M} \subseteq^{\unb} {\mathbb{N}}$, there exists ${m_{0}} 
%\in {M}$ such that $m_{0} > 0$. But then again there exists ${m_{1}} \in {M}$ such that $m_{1} > m_{0}$.
%By $DC_{0}$ 
a sequence $m_{0} < m_{1} < \ldots < m_{n} < m_{n+1} < \ldots,$
is formed in $M$ such that $\alpha(m_{0}) \leq \alpha(m_{1}) \leq \ldots \leq \alpha(m_{n}) \leq \alpha(m_{n+1}) 
\leq \ldots$ .
%$(\leftarrow)$ If ${U} \subseteq^{\unb} {\mathbb{N}}$, and $s_{0} < s_{1} < ... < s_{n} < s_{n+1} < ...$
%a sequence of elements of $U$ constructed as previously, then 
\end{proof}

In contrast to $\forall_{l \geq {2}}(\DL(1, l))$, the infinite case $\DL(1, \infty)$ is not 
constructively acceptable. In~\cite{Ve04} Veldman gave a Brouwerian counterexample to 
$\DL(1, \infty)$. Here we show its constructive equivalence to LPO, which is the following formula
$$\forall_{\alpha \in \FNtwo}\bigg(\exists_{n \in \Nat}(\alpha(n) = 1) \vee 
\forall_{n \in \Nat}(\alpha(n) = 0)\bigg).$$
LPO is only classically true and a taboo for all varieties of
constructive mathematics. Next we show that DL$(1, \infty)$ implies LPO.

\begin{proposition}\label{prp: DL(1, S) implies LPO} ${{\DL(1, \infty)} \rightarrow {\LPO}}$.
\end{proposition}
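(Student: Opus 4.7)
Given $\alpha \in \FNtwo$, the plan is to build a single auxiliary sequence $\beta \in \FNN$ out of $\alpha$ so that, once $\DL(1,\infty)$ is applied to $\beta$, the first term of the resulting weakly increasing subsequence already decides which disjunct of $\LPO$ holds. The key trick is to arrange that $\beta$ is \emph{weakly decreasing}; then any infinite weakly increasing subsequence of $\beta$ is forced to be constant, and the single constant value encodes the dichotomy of $\LPO$.

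Concretely, for $n \in \Nat$ set $M(n) = \max\{\alpha(0), \alpha(1), \ldots, \alpha(n)\}$, which is well-defined and takes values in $\{0,1\}$ because equality on $\Nat$ is decidable and $\alpha$ has $\{0,1\}$-values. Since $M(n) \leq M(n+1)$, the sequence $\beta \in \FNN$ defined by
$$\beta(n) = 1 \dotminus M(n)$$
is weakly decreasing, equal to $1$ as long as $\alpha(k) = 0$ for every $k \leq n$, and equal to $0$ once some $\alpha(k) = 1$ with $k \leq n$ has appeared. Now apply $\DL(1,\infty)$ to $\beta$ to obtain $i_1 < i_2 < \ldots$ in $\Nat$ with $\beta(i_1) \leq \beta(i_2) \leq \beta(i_3) \leq \ldots$.

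From the weak monotonicity of $\beta$ we get $\beta(i_{n+1}) \leq \beta(i_n)$, which combined with the inequality supplied by $\DL(1,\infty)$ gives $\beta(i_n) = \beta(i_1)$ for every $n \geq 1$. Since $\beta(i_1) \in \{0,1\}$, we may decide it. If $\beta(i_1) = 0$, then $M(i_1) = 1$, so there exists $k \leq i_1$ with $\alpha(k) = 1$, which yields the existential disjunct $\exists_{n \in \Nat}(\alpha(n) = 1)$ of $\LPO$. If $\beta(i_1) = 1$, then $\beta(i_n) = 1$, hence $M(i_n) = 0$, for every $n$; because $(i_n)_{n\geq 1}$ is strictly increasing and therefore unbounded, this forces $\alpha(k) = 0$ for every $k \in \Nat$, yielding the universal disjunct $\forall_{n \in \Nat}(\alpha(n) = 0)$.

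I do not foresee a genuine obstacle: the whole argument rests on one small constructive trick, namely collapsing $\DL(1,\infty)$ to the statement ``a weakly decreasing sequence has an infinite constant subsequence'' by passing from $\alpha$ to its running-maximum complement $\beta = 1 \dotminus M$. The only point to be careful about is that $M(n)$, and hence $\beta(n)$, must be computable within $\BISH$ from $\alpha$; this is immediate from the decidability of equality on $\Nat$ applied to the finitely many values $\alpha(0), \ldots, \alpha(n)$.
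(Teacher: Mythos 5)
Your proof is correct, and it is a leaner variant of the paper's argument. Both proofs hinge on the same core idea: encode the prefix behaviour of $\alpha$ in a decidable $\{0,1\}$-valued indicator sequence (``has a $1$ appeared up to stage $n$?''), apply $\DL(1,\infty)$ to it, and read off the $\LPO$ dichotomy from the first term of the resulting weakly increasing subsequence. The difference is structural: the paper applies $\DL(1,\infty)$ twice, first to $\alpha$ itself to produce indices $i_1<i_2<\ldots$, and then to the indicator $\beta$ sampled along those indices, finishing with a small interleaving argument ($i_k>n$, $j_l>k$, hence $i_{j_l}>n$) to propagate the universal disjunct; you instead apply $\DL(1,\infty)$ once, directly to the running-maximum complement $\beta(n)=1\dotminus\max\{\alpha(0),\ldots,\alpha(n)\}$, and exploit that $\beta$ is weakly decreasing, so the DL-subsequence is forced to be constant and the unboundedness of $(i_n)$ immediately yields $\forall_{k}(\alpha(k)=0)$ in the second case. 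Your version also proves the stated form $\exists_{n}(\alpha(n)=1)\vee\forall_{n}(\alpha(n)=0)$ directly, whereas the paper proves the $0/1$-swapped form and appeals to its trivial equivalence. What the single-application version buys is economy and a cleaner case analysis; what the paper's version illustrates (deliberately, in keeping with its theme) is the repetitive use of $\DL(1,\infty)$ as a black box, at the cost of the extra sampling step. All constructive care points (decidability of the finite maximum, decidability of $\beta(i_1)\in\{0,1\}$, unboundedness of a strictly increasing sequence of indices) are handled correctly in your write-up.
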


\begin{proof} We prove that if ${\alpha} \in \FNtwo$, then
$\exists_{n \in \Nat}(\alpha(n) = 1) \vee 
\forall_{n \in \Nat}(\alpha(n) = 0)$, which is trivially equivalent to the original formulation of LPO.
Applying $\DL(1, \infty)$ on $\alpha$ we get a sequence of indices 
$i_{1} < i_{2} < i_{3} < \ldots$,
such that ${\alpha(i_{1})} \leq {\alpha(i_{2})} \leq {\alpha(i_{3})} \leq \ldots$ \ . Note that if 
${\alpha(i_{1})} = {1}$, then
${\alpha(i_{n})} = {1}$, for each ${n} \geq 1$. Through $\alpha$ we define a sequence 
$\beta \in \FNtwo$ by 
$$\beta(n) = \left\{ \begin{array}{ll}
              1                        &\mbox{, if $\forall_{m \leq {i_{n}}}({\alpha(m)} = {1})$}\\
              0                        &\mbox{, if $\exists_{m \leq {i_{n}}}({\alpha(m)} = {0})$.}
                     \end{array}
            \right. $$
By DL$(1, \infty)$ on $\beta$, a sequence of indices $j_{1} < j_{2} < j_{3} < \ldots,$ is formed
such that ${\beta(j_{1})} \leq {\beta(j_{2})} \leq {\beta(j_{3})} \leq \ldots$ \ . If ${\beta(j_{1})} = {0}$,
then 
$\exists_{m \leq {i_{j_{1}}}}({\alpha(m)} = {0})$, and the conclusion of LPO is reached. 
If ${\beta(j_{1})} = {1}$, then again ${\beta(j_{m})} = {1}$, for each ${m} \in {\mathbb{N}}$. 
In that case we show that $\forall_{n \in \Nat}({\alpha(n)} = {1})$. Consider a fixed ${n} \in {\mathbb{N}}$.
Then we can find $i_{k} > n$ and $j_{l} > k$. Since ${\beta(j_{l})} = {1}$, $\forall_{m \leq 
{i_{j_{l}}}}({\alpha(m)} = {1})$. But $k < j_{l}$ implies that $n < i_{k} < i_{j_{l}}$, therefore 
${\alpha(n)} = {1}$.
\end{proof}

In~\cite{Ra11}, p.148, Ratiu asked whether $\DL(k, U)$ implies LPO. By 
Propositions~\ref{prp: infinity}
and \ref{prp: DL(1, S) implies LPO} we get an affirmative answer to this.
% 
% that $${{BIM}} \vdash {{DL(k, S)} \rightarrow {LPO}},$$ so within
% BISH the question of Ratiu is answered affirmatively.
% The dependence of DL$(1, \infty)$ to classical principles directs one to find which are 
% (the weakest ones) necessary to prove it. In [Ratiu 2011] it is also asked whether DL$(k, S)$ is 
% derivable from $AC_{00}$ and LPO. Next three propositions answer affirmatively this question.

\begin{proposition}\label{prp: LPO implies PEM} If $P(n)$ is a decidable predicate on
$\mathbb{N}$, then $\LPO \rightarrow [\forall_{n \in \Nat}(P(n)) \vee \exists_{n \in \Nat}(\neg{P(n)})]$.
\end{proposition}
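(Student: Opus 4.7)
The plan is to reduce the statement directly to LPO by encoding the decidable predicate $P$ as a binary sequence. Since $P$ is decidable on $\Nat$, for every $n \in \Nat$ we can effectively decide whether $P(n)$ or $\neg P(n)$ holds, so we may define a sequence $\alpha \in \FNtwo$ by
$$\alpha(n) = \left\{ \begin{array}{ll}
                      0 &\mbox{, if $P(n)$}\\
                      1 &\mbox{, if $\neg P(n)$.}
                     \end{array}
            \right.$$
The decidability hypothesis on $P$ is precisely what is needed to justify, constructively, that $\alpha$ is a well-defined element of $\FNtwo$.

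Next I would apply LPO to this sequence $\alpha$, which yields the disjunction
$$\exists_{n \in \Nat}(\alpha(n) = 1) \ \vee \ \forall_{n \in \Nat}(\alpha(n) = 0).$$
In the first case, by the definition of $\alpha$, an index $n$ with $\alpha(n) = 1$ is exactly an index with $\neg P(n)$, so we obtain $\exists_{n \in \Nat}(\neg P(n))$. In the second case, for every $n \in \Nat$ we have $\alpha(n) = 0$, hence $P(n)$, so we obtain $\forall_{n \in \Nat}(P(n))$. Combining the two cases via $\vee$-introduction on the required disjunction gives the conclusion.

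There is no genuine obstacle here: the only subtle point is the constructive definition-by-cases of $\alpha$, which is legitimate precisely because $P$ is assumed decidable. Without decidability the characteristic function of $P$ need not exist in BISH, so the reduction would fail; with decidability, the argument is just a direct application of LPO.
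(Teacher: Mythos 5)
Your proof is correct and coincides with the paper's own argument: both define the characteristic sequence $\alpha(n) = 0$ if $P(n)$ and $\alpha(n) = 1$ if $\neg P(n)$ (legitimate by decidability of $P$) and observe that LPO applied to $\alpha$ is exactly the desired disjunction $\forall_{n \in \Nat}(P(n)) \vee \exists_{n \in \Nat}(\neg P(n))$.
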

\begin{proof}If we define
$$  \alpha(n) = \left\{ \begin{array}{ll}
              1                        &\mbox{, if $\neg{P(n)}$}\\
              0                        &\mbox{, if $P(n)$}
                     \end{array}
            \right. $$
then LPO on $\alpha$ is exactly $\forall_{n \in \Nat}(P(n)) \vee \exists_{n \in \Nat}(\neg{P(n)})$.
%the conclusion $[(\forall{n})P(n)] \vee [(\exists{n}\neg{P(n)}]$. 
\end{proof}

\begin{definition}
If ${i} \in {\mathbb{N}}$ and $\alpha \in \FNN$, we call $i$ a peak for 
$\alpha$, $\Peak_{\alpha}(i)$, if and only if
$\forall_{n > i}(\alpha(i) > \alpha(n))$. 
\end{definition}

\begin{proposition}\label{prp: LPO implies DL(1, S)}If $\alpha \in \FNN$,
then $$\LPO \rightarrow \forall_{i \in \Nat}\bigg(\Peak_{\alpha}(i) \vee 
\exists_{n > i}(\alpha(i) \leq \alpha(n))\bigg).$$
\end{proposition}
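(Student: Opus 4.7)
The plan is to reduce the statement to a direct application of Proposition~\ref{prp: LPO implies PEM}. Fix $\alpha \in \FNN$ and $i \in \Nat$, and define the predicate
\[ P(n) \ :\leftrightarrow \ \alpha(i) > \alpha(i+1+n). \]
Since the order on $\Nat$ is decidable, $P$ is a decidable predicate on $\Nat$, so Proposition~\ref{prp: LPO implies PEM} applies and gives, under the assumption $\LPO$, the disjunction $\forall_{n \in \Nat}P(n) \vee \exists_{n \in \Nat}\neg P(n)$.

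The next step is just to unpack each disjunct. The universal disjunct $\forall_{n \in \Nat}P(n)$ says $\alpha(i) > \alpha(i+1+n)$ for every $n$, and since every $m > i$ is of the form $i + 1 + n$ for some $n \in \Nat$, this is precisely $\Peak_{\alpha}(i)$. The existential disjunct $\exists_{n \in \Nat}\neg P(n)$ says that there exists $n$ with $\alpha(i) \leq \alpha(i+1+n)$, and setting $m := i+1+n$ yields $m > i$ with $\alpha(i) \leq \alpha(m)$, which is the second disjunct of the statement.

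There is essentially no obstacle: the whole content is the observation that peakhood at a fixed index is an instance of the universal half of a decidable dichotomy, and hence is covered by the version of $\LPO$ for decidable predicates already proved in Proposition~\ref{prp: LPO implies PEM}. No appeal to $\DC$ or to any of the finite cases $\DLs(1,l)$ is needed here, because the index $i$ is fixed and only a single decidable search along the tail $\alpha(i+1), \alpha(i+2), \ldots$ is involved.
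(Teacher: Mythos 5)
Your proposal is correct and is essentially identical to the paper's own argument: both fix $i$, apply Proposition~\ref{prp: LPO implies PEM} to the decidable predicate $\alpha((n+1)+i) < \alpha(i)$, and read off the two disjuncts as $\Peak_{\alpha}(i)$ and $\exists_{n > i}(\alpha(i) \leq \alpha(n))$, respectively. Nothing is missing.
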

\begin{proof} If ${\mathbb{N}_{> {i}}} = {\{{n} \in \mathbb{N} : {n} > {i}\}}$ and
$e: \mathbb{N} \rightarrow \mathbb{N}_{> {i}}$ is the bijection defined by 
${e(n)} = {(n+1) + i}$, for every $n \in \Nat$,
then for the decidable predicate $${P_{i}(n)} \leftrightarrow {\alpha(e(n)) < \alpha(i)}
\leftrightarrow {\alpha((n+1)+i) < \alpha(i)},$$ 
Proposition~\ref{prp: LPO implies PEM} gives
$$\forall_{n \in \Nat}(\alpha((n+1)+i) < \alpha(i)) \vee \exists_{n \in \Nat}(\alpha((n+1)+i) \geq \alpha(i)).$$ 
Therefore, either $i$ is a peak for $\alpha$, or there is an index after $i$ of at least the same 
value as $i$ under $\alpha$, which is exactly what we need to prove. 
\end{proof}

\begin{proposition}\label{prp: LPO implies DL(1, S)} ${{\LPO} \rightarrow {\DL(1, \infty)}}$.
\end{proposition}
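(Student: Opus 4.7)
The plan is to use $\LPO$ to decide, for each $i$, whether $i$ is a peak for $\alpha$, to locate a ``final peak'' beyond which no peaks occur, and then iterate the previous proposition to construct the desired infinite weakly increasing subsequence. The whole construction will rest on the dichotomy supplied by the preceding proposition together with $\DC$.

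First I would observe that the previous proposition supplies, for each $i$, the dichotomy $\Peak_{\alpha}(i) \vee \exists_{n > i}(\alpha(i) \leq \alpha(n))$, so ``being a peak'' is a decidable predicate on $\Nat$ under $\LPO$. Hence I can form $\beta \in \FNtwo$ with $\beta(i) = 1$ iff $\Peak_{\alpha}(i)$, and apply $\LPO$ to $\beta$. In the case $\forall_{i \in \Nat}(\beta(i) = 0)$ of no peaks, the previous proposition gives $\exists_{n > i}(\alpha(i) \leq \alpha(n))$ for every $i$, and $\DC$ immediately produces the required sequence $i_{0} < i_{1} < \ldots$ by setting $i_{0} = 0$ and $i_{k+1}$ to be the least $n > i_{k}$ with $\alpha(i_{k}) \leq \alpha(n)$.

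In the remaining case some peak exists, and I would aim to produce a ``last peak'' $p^{*}$ with $\Peak_{\alpha}(p^{*})$ and no $m > p^{*}$ a peak. To produce $p^{*}$ I would iterate: let $p_{0}$ be the least peak (obtained from $\LPO$ applied to $\beta$), and recursively let $p_{k+1}$ be the least peak strictly greater than $p_{k}$, when such an index exists (decided by another application of $\LPO$ to $\beta$ shifted past $p_{k}$). Once the process halts at some $p_{k} = p^{*}$, I apply the previous case to the tail of $\alpha$ starting at $p^{*} + 1$, observing that by construction no $n > p^{*}$ is a peak, so the required weakly increasing sequence is obtained starting from $i_{0} = p^{*} + 1$.

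The main obstacle is thus to show constructively that this peak-enumeration process \emph{does} terminate. The key observation is that whenever both $p_{k}$ and $p_{k+1}$ have been produced, the definition of peak forces $\alpha(p_{k}) > \alpha(p_{k+1})$, because $p_{k+1} > p_{k}$ and $p_{k}$ is a peak; if the process never halted, $(\alpha(p_{k}))_{k \in \Nat}$ would be an infinite strictly decreasing sequence of natural numbers, which is ruled out by Corollary~\ref{crl: cor12}(ii). To turn this into a clean termination argument I would argue by induction on $\alpha(p_{0})$, showing that after at most $\alpha(p_{0}) + 1$ iterations of the $\LPO$ check one reaches a $p_{k}$ with no peak above it; everything else is routine bookkeeping with $\DC$.
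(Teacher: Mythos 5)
Your proposal is correct and follows essentially the same route as the paper: decide peak-hood via $\LPO$, handle the no-peak case with $\DC$, and in the other case enumerate successive peaks, whose $\alpha$-values strictly decrease, so that after at most $\alpha(p_{0})+1$ applications of $\LPO$ a peak-free tail is reached and the first case applies. Your closing positive, bounded-iteration argument (rather than the initial contradiction via the corollary) is exactly the termination argument the paper uses.
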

\begin{proof} Through the previous decidability of $\Peak_{\alpha}(i)$ we define a sequence 
${\beta} \in \FNtwo$ by 
$$  \beta(n) = \left\{ \begin{array}{ll}
              0                        &\mbox{, if $\exists_{m > n}({\alpha(n)} \leq {\alpha(m)})$}\\
              1                        &\mbox{, if $\Peak_{\alpha}(n)$}
                     \end{array}
            \right. $$
By LPO, if $\forall_{n \in \Nat}({\beta(n)} = {0}) \leftrightarrow 
\forall_{n \in \Nat}\exists_{m > n}({\alpha(n)} \leq {\alpha(m)})$, then, since $0$ is positively 
not a peak for $\alpha$, $\exists_{n_{1} > 0}(\alpha(0) 
\leq {\alpha(n_{1})})$. Similarly, $\exists_{n_{2} > n_{1}}({\alpha(n_{1})} \leq {\alpha(n_{2})})$,
and so on. By $\DC$ a sequence ${0} = {n_{0}} < n_{1} <
n_{2} < \ldots$, is constructed such that ${\alpha(n_{0})} \leq {\alpha(n_{1})} \leq {\alpha(n_{2})}
\leq \ldots$ \ .
If $\exists_{n \in \Nat}(\beta(n) = 1) \leftrightarrow \exists_{n \in \Nat}(\Peak_{\alpha}(n))$,
and if we consider the
tail of $\alpha$ 
$$\alpha(n+1), \alpha(n+2), \alpha(n+3), \ldots,$$    
then 
$${\alpha(j)} \in {\{0, 1, \ldots, \alpha(n)-1\}},$$ 
for every ${j} \geq {n+1}$. Since this tail of
$\alpha$ is a new sequence, then either it has positively no picks, and the previous case is applied,
or there is some index $n+m+1$ which is a peak for the sequence $\alpha(n+1), \alpha(n+2), \alpha(n+3), \ldots$ \ . 
Since ${\alpha(n+m+1)} \in {\{0, 1, \ldots, \alpha(n)-1\}}$, then 
${\alpha(j)} \in {\{0, 1, \ldots, \alpha(n)-2\}}$, 
for every ${j} > {n+m+1}$. After at most $\alpha(n)$-$1$ number of steps we will have found a tail of 
$\alpha$ with no peaks. If we apply then the argument of the first case, we reach our conclusion. 
\end{proof}

% Combining propositions \ref{prp: DL(1, S) implies LPO} and \ref{prp: LPO implies DL(1, S)} we get that
% LPO and DL$(1, \infty)$  are equivalent within BIM $+ DC_{0}$, therefore they are equivalent within BISH
% which contains BIM $+ DC_{0}$. Also, Proposition \ref{prp: LPO implies DL(1, S)} improves Proposition
% \ref{prp: MIN DL(1, infinity)}, since, as we show next, MIN implies LPO within BIM. 
% 
% \begin{proposition}\label{prp: MIN implies LPO} ${{MIN} \rightarrow {LPO}}$.
% \end{proposition}
% \begin{proof} We prove that if ${\beta} \in {2^{\mathbb{N}}}$, then
% $[(\exists{n}){\beta(n)} = {0}] \vee [(\forall{n}){\beta(n)} = {1}]$. Let
% ${B} = {\{\beta(n) : {n} \in {\mathbb{N}}\}}$. Since ${B} \neq {\emptyset}$, 
% MIN provides the  existence of $min(B)$. If ${min(B)} = {1}$, then $(\forall{n}){\beta(n)} = {1}$. 
% If ${min(B)} = {0}$, we prove that $(\exists{n}){\beta(n)} = {0}$. Hypothesis ${min(B)} = {0}$ 
% alone entails $\neg{\neg{[(\exists{n}){\beta(n)} = {0}]}}$, since ${\neg{[(\exists{n}){\beta(n)} = {0}]}} 
% \rightarrow {(\forall{n}){\beta(n)} = {1}}$, therefore ${min(B)} = {1}$. The set ${M} = {\{n : \beta(n) = 0\}}$
% is non-empty, since if it was empty, $(\forall{n}){\beta(n)} = {1}$ is the case, and then ${min(B)} = {1}$, 
% which contradicts our hypothesis ${min(B)} = {0}$. But then, by MIN again, there exists ${min(M)} = {n_{0}}$,
% for which ${\beta(n_{0})} = {0}$, i.e., $(\exists{n}){\beta(n)} = {0}$ is proven.  
% \end{proof}

In analogy to Proposition~\ref{prp: stolz} we show that $\DL(1, \infty)$ implies Stolzenberg's principle
$\PH(2, \Nat, \Nat)$.

%We also show that  implies IPH even within BIM$^{*}$. 

\begin{proposition}\label{prp: DL implies SP} ${{\DL(1, \infty)} \rightarrow {\PH(2, \Nat, \Nat)}}$.
\end{proposition}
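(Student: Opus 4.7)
The plan is to mimic the finite argument in Proposition~\ref{prp: stolz} but to bypass the decision ``what is the last/constant value?'' that was carried out there, since an infinite weakly increasing $\{0,1\}$-valued sequence is eventually constant only in a non-constructive sense. I propose a double application of $\DL(1,\infty)$.

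Given an infinite sequence $(\alpha_n)_{n\in\Nat}$ in some inhabited set $A$ and a $2$-coloring $\chi : A \rightarrow \twoset$, first I would observe that $\chi\circ\alpha \in \FNN$, since $\twoset \subseteq \Nat$. Applying $\DL(1,\infty)$ to $\chi\circ\alpha$ yields indices $i_{1} < i_{2} < \ldots$ with
$$\chi(\alpha_{i_{1}}) \leq \chi(\alpha_{i_{2}}) \leq \chi(\alpha_{i_{3}}) \leq \ldots.$$
Then I would define $\beta \in \FNN$ by $\beta(n) = 1 \dotminus \chi(\alpha_{i_{n}})$, noting that $\beta$ takes values in $\twoset$ and is weakly \emph{decreasing}: for every $n < m$ we have $\beta(n) \geq \beta(m)$.

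Next I would apply $\DL(1,\infty)$ a second time, now to $\beta$, obtaining indices $j_{1} < j_{2} < \ldots$ with $\beta(j_{1}) \leq \beta(j_{2}) \leq \ldots$. Combining this with the weak decrease of $\beta$ along any increasing sequence of arguments forces $\beta(j_{1}) = \beta(j_{2}) = \ldots$, hence $\chi(\alpha_{i_{j_{1}}}) = \chi(\alpha_{i_{j_{2}}}) = \ldots$. The subsequence $\alpha_{i_{j_{1}}}, \alpha_{i_{j_{2}}}, \ldots$ of $(\alpha_{n})$ is then infinite and monochromatic under $\chi$, which is $\PH(2, \Nat, \Nat)$.

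The only subtle point — and what I would flag as the main conceptual step — is the need for the second application of $\DL(1,\infty)$: a single application produces a weakly increasing $\twoset$-valued sequence, but constructively one cannot decide its eventual constant value in order to exhibit the monochromatic subsequence directly. Replacing that case split by a second invocation of $\DL(1,\infty)$ to a \emph{weakly decreasing} auxiliary sequence turns the two monotonicities into an equality with no excluded middle needed (beyond what $\DL(1,\infty)$ already carries, cf.\ Proposition~\ref{prp: DL(1, S) implies LPO}). An alternative, essentially equivalent, route would be to invoke $\LPO$ — available via Proposition~\ref{prp: DL(1, S) implies LPO} — directly on $(\chi(\alpha_{i_{n}}))_{n}$ to decide whether it is identically $0$ or takes the value $1$ at some index $n_{0}$, from which weak increase gives $\chi(\alpha_{i_{n}}) = 1$ for all $n \geq n_{0}$; I prefer the double-application version as it stays within the statement of $\DL(1,\infty)$ itself.
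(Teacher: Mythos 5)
Your proposal is correct, and its main route genuinely differs from the paper's in how the argument is finished. Both proofs begin identically: apply $\DL(1,\infty)$ to $\chi\circ\alpha$ to obtain indices $i_{1}<i_{2}<\ldots$ with $\chi(\alpha_{i_{1}})\leq\chi(\alpha_{i_{2}})\leq\ldots$. The paper then closes the argument exactly along the lines of your ``alternative route'': it invokes $\LPO$ (available by Proposition~\ref{prp: DL(1, S) implies LPO}) to decide whether all terms of $(\chi(\alpha_{i_{n}}))_{n}$ are $0$ or some term equals $1$, and in the second case uses the weak increase to conclude that the subsequence is eventually of color $1$ (the paper writes that the tail of $\alpha$ is monochromatic, but what is meant is the tail $(\alpha_{i_{m}})_{m\geq n}$ of the extracted subsequence --- your formulation avoids this imprecision). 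Your preferred route instead applies $\DL(1,\infty)$ a second time, to the weakly decreasing sequence $\beta(n)=1\dotminus\chi(\alpha_{i_{n}})$, and obtains constancy of $\chi$ along $\alpha_{i_{j_{1}}},\alpha_{i_{j_{2}}},\ldots$ by combining the two monotonicities into an equality; since $i_{j_{1}}<i_{j_{2}}<\ldots$, this is the required infinite monochromatic subsequence. What each approach buys: the paper's version is shorter, explicitly exhibits the color in each case, and mirrors the structure of the finite Proposition~\ref{prp: stolz}, at the cost of routing through $\LPO$; your version stays literally within the statement of $\DL(1,\infty)$, needs no case distinction on an undecidable alternative, and so isolates more sharply where the non-constructive strength is actually used.
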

\begin{proof} Suppose that $\alpha \in \FNN$ and that $\chi$ is 
a $2$-coloring of $\{\alpha_{n} : {n} \in {\mathbb{N}}\}$. By $\DL(1, \infty)$ on $\chi 
\circ {\alpha}: \mathbb{N} \rightarrow 2$ there are indices 
$i_{1} < i_{1} < i_{3} < \ldots$, such that ${\chi(\alpha_{i_{1}})} \leq 
{\chi(\alpha_{i_{2}})} \leq {\chi(\alpha_{i_{3}})} \leq \ldots$ \ . 
Since ${{\DL(1, \infty)} \rightarrow {\LPO}}$, either all terms 
of $[\chi(\alpha_{i_{n}})]_{n}$ are $0$, or there is a term $\alpha_{i_{n}}$ such that 
${\chi(\alpha_{i_{n}})} = {1}$. In the first case $(\alpha_{i_{n}})_{n}$ itself is monochromatic,
while in the second the tail $\alpha_{n}, \alpha_{n+1}, \alpha_{n+2} \ldots$, of $\alpha$ is monochromatic.
\end{proof}

\section{Concluding remarks}
\label{sec: concl}

The extraction of a bound $M_{\alpha_{1}, \ldots, \alpha_{k}}(l)$ from our proof of $\DL(k, l)$ resembles 
the extraction of a term out of a proof in the field of program extraction. It is an
example of term extracted in an informal system of mathematics, like BISH.

The following open questions, or tasks need to be addressed in future work. 

\begin{enumerate}
\item To study further these terms
$M_{\alpha_{1}, \ldots, \alpha_{k}}(l)$, since by Berger's constructive proof in~\cite{Be16} of 
Higman's lemma for words of an alphabet with two letters by the finite cases of Dickson's lemma, 
a bound for this case of Higman's lemma can be formulated.
\item Results like Proposition~\ref{prp: 1l} have already been implemented in MINLOG. The implementation forced 
the inductive formulation of appropriate lemmas that cover the repetitive arguments used in the informal proofs.
It will be interesting to codify formally the more complex repetitive arguments found in the 
rest constructive proofs presented here. 
\item To extend the tools found in the proofs of Theorems~\ref{thm-1l} and~\ref{thm-2232} in 
order to prove these results in complete generality.
\item To extend our study of the finite and infinite cases of Dickson's lemma to a similar 
study of the finite and infinite cases of combinatorial theorems like
Higman's lemma, or Kruskal's theorem.

\end{enumerate}


\begin{thebibliography}{}
%\softraggedright


\bibitem{AR10} P. Aczel, and M. Rathjen: \textit{Notes on Constructive Set Theory}, Book draft, 2010.
\bibitem{Be16} J. Berger: Dickson's lemma and Higman's lemma are equivalent, South
American Journal of Logic, to appear, 2016. 
\bibitem{BS16} J. Berger, and H. Schwichtenberg: A bound for Dickson's Lemma, Logical Methods in Computer 
Science, to appear, 2016.
\bibitem{BSS01} U. Berger, H. Schwichtenberg, and M. Seisenberger: The
Warshall algorithm and Dickson's lemma: Two examples of realistic
program extraction. Journal of Automated Reasoning, 26, 2001, 205-221.
\bibitem{BBS02} U. Berger, W. Buchholz, and H.Schwichtenberg: Refined program
extraction from classical proofs, Annals of Pure and Applied Logic, 114, 2002, 3-25.
%Berghofer 2003.
\bibitem{BV93} M. Bezem, and V. Veldman: Ramsey's theorem and the pigeonhole principle
in intuitionistic mathematics. J. London Math. Soc, 47, 1993, 193-211.
\bibitem{Bi67} E. Bishop: \textit{Foundations of Constructive Analysis}, McGraw-Hill, 1967.
\bibitem{BB85} E. Bishop, and D. Bridges: \textit{Constructive Analysis}, Grundlehren der Math. Wissenschaften 279,
Springer-Verlag, Heidelberg-Berlin-New York, 1985.
\bibitem{BR87} D. S. Bridges, and F. Richman: \textit{Varieties of Constructive Mathematics},
Cambridge University Press, 1987.
\bibitem{BM73} H. Burkill, and L. Mirsky: Monotonicity, Journal of Mathematical Analysis and 
Applications, 41, 1973, 391-410.
%\bibitem{Co04} T. Coquand: About Brouwer's fan theorem, Revue internationale de
%philosophie 4/2004 (n$^{\circ}$ 230), 483-489.
\bibitem{CP99} T. Coquand, and H. Persson: Gr\"{o}bner Bases in Type Theory, in
\textit{TYPES'98}, T. Altenkirch et. al. (eds.), LNCS 1657, 1999, 33-46.
\bibitem{CLO92} D. Cox, J. Little, and D. O'Shea: \textit{Ideal, Varieties, and
Algorithms. An Introduction to Computational Algebraic Geometry and
Commutative Algebra}, Undergraduate Texts in Mathematics. Springer Verlag, 1992.
\bibitem{Di13} L. Dickson: Finiteness of the odd perfect and primitive abundant
numbers with n distinct prime factors. Am. J. Math, 35, 1913, 413-422.
%\bibitem{Fr97} D. Fridlender: \textit{Higman's Lemma in Type Theory}, PhD Thesis, Chalmers University, 1997.
%D. Fridlender: An Interpretation of the Fan Theorem in Type Theory,
%in LNCS 1657, T. Altenkirch, W. Naraschewski, B. Reus (Eds.),
%Springer 1999, pp.93-105.
\bibitem{Fr98} H. M. Friedman: Finite functions and the necessary use of large cardinals, Ann. of math. (2) 
148, no.3, 1998. 803-893.
\bibitem{GRS80} R. L. Graham, B. L. Rotschild, and J. H. Spencer: \textit{Ramsey Theory}, Wiley, 1980
\bibitem{HHM08} J. M. Harris, J. L. Hirst, and M. J. Mossinghoff: \textit{Combinatorics and Graph Theory},
Springer, 2008.
\bibitem{He04} A. Hertz: \textit{A constructive version of the Hilbert basis theorem}, Master's Thesis,
Carnegie Mellon University, 2004.
\bibitem{Is06} H. Ishihara: Reverse Mathematics in Bishop's Constructive

Mathematics, Philosophia Scienti\ae{}, 2006, 43-59.
\bibitem{MM03} F. Martin-Mateos, J. Alonso, M. Hidalgo, and J. Ruiz-Reina: A Formal Proof
of Dickson's Lemma in ACL2, in M. Y. Vardi, and A. Voronkov (Eds.) 
\textit{10th International Conference on Logic for
Programming Artificial Intelligence and Reasoning}, LNAI 2850, Springer-Verlag 2003, 49-58.
\bibitem{My75} J. Myhill: Constructive Set Theory, J. Symbolic Logic 40, 1975, 347-382.
%\bibitem{NW63} C. St. J. A. Nash-Williams: On well-quasi-ordering infinite
%trees. Proceedings of the Cambridge Philosophical Society,
%61, 1963, 697-720.
%Ogawa PhD Thesis, 2002.\\
%I. Petrakis: \emph{Brouwer's Fan Theorem}, MSc Thesis, Aristotle University of Thessaloniki, 2010.\\
%I. Petrakis: A constructive proof of Dickson's lemma, pre-print of a talk presented in the workshop
%``Proofs and Computations $N + 1$'', LMU Munich 2011.\\
%F. P. Ramsey: On a problem of formal logic. Proceedings of the
%London Mathematical Society, 30:264-286, 1930.\\
%D. Ratiu, T. Trifonov: Exploring the Computational Content of the
%Infinite Pigeonhole Principle, Journal of Logic and Computation,
%0:11-22, 2010.\\
\bibitem{Pe19} I.~Petrakis: Dependent Sums and Dependent Products in Bishop's Set Theory,
in P. Dybjer et. al. (Eds) TYPES 2018, LIPIcs, Vol. 130, Article No. 3, 2019.
\bibitem{Pe20} I.~Petrakis: \textit{Families of Sets in Bishop Set Theory}, Habilitationsschrift, LMU, Munich, 2020,
available at \texttt{https://www.mathematik.uni-muenchen.de/$\sim$petrakis/content/Theses.php}.
\bibitem{Pe21} I. Petrakis: \textit{Direct spectra of Bishop spaces and their limits}, Logical Methods in Computer Science,
Volume 17, Issue 2, 2021, pp. 4:1-4:50.
\bibitem{Ra11} D. Ratiu: \textit{Refinement of Classical Proofs for Program Extraction}, 
PhD Thesis, LMU, Munich 2011.
%F. Richman: Constructive mathematics without choice, in Schuster, Berger and Osswald, 2001, 199-2005.\\
\bibitem{RS93} F. Richman, and G. Stolzenberg: Well Quasi-Ordered Sets, Advances in Mathematics 97, 
1993, 145-153.
\bibitem{Sc05} C. Schwarzweller: Groebner bases - theory refinement in the Mizar system, in \textit{Lecture 
Notes in Artificial Intelligence}, volume 3863, Springer Verlag, 2005, 299-314.
\bibitem{SW11} H. Schwichtenberg, and S. Wainer: \textit{Proofs and Computations}, Cambridge University Press,
2011.
%\bibitem{Se03} M. Seisenberger: \textit{On the Constructive Content of Proofs}, PhD Thesis, LMU Munich 2003.
%\bibitem{Si99} Simpson: 1999
\bibitem{VCW12} D. Vytiniotis, T. Coquand, and D. Wahlstedt: Stop When You Are Almost-Full,
Adventures in Constructive Termination, in L. Beringer and A. Felty (Eds.) ITP 2012, LNCS 7406,
Springer-Verlag Berlin Heidelberg, 2012, 250-265.
\bibitem{Ve04} Wim Veldman: An intuitionistic proof of Kruskal's theorem, Archives of Mathematical Logic 43, 2004,
215-264.









\end{thebibliography}
\end{document}